\theoremstyle{plain}
\newtheorem{thm}{\protect\theoremname}
\theoremstyle{definition}
\newtheorem{defn}[thm]{\protect\definitionname}
\theoremstyle{plain}
\newtheorem{prop}[thm]{\protect\propositionname}
\theoremstyle{remark}
\newtheorem{rem}[thm]{\protect\remarkname}
\theoremstyle{plain}
\newtheorem{cor}[thm]{\protect\corollaryname}
\theoremstyle{plain}
\newtheorem{lem}[thm]{\protect\lemmaname}
\theoremstyle{remark}
\newtheorem*{rem*}{\protect\remarkname}
\providecommand{\corollaryname}{Corollary}
\providecommand{\definitionname}{Definition}
\providecommand{\lemmaname}{Lemma}
\providecommand{\propositionname}{Proposition}
\providecommand{\remarkname}{Remark}
\providecommand{\theoremname}{Theorem}
\newcommand{\dx}{\mathrm{d}}
\newcommand{\E}{\mathfrak{E}}
\newcommand{\I}{\mathfrak{I}}
\newcommand{\M}{\mathfrak{M}}
\begin{document}
\pagestyle{plain}
\title{Laplace convolutions of weighted averages of arithmetical functions}
\author[M.~Cantarini, A.~Gambini, A.~Zaccagnini]
       {Marco Cantarini, Alessandro Gambini, Alessandro Zaccagnini}
\subjclass[2020]{Primary 11P32. Secondary 44A05, 42A85}
\keywords{Additive problems; Laplace transform; Convolution}
\begin{abstract}
Let $G(g;x):=\sum_{n\leq x}g(n)$ be the summatory function of an arithmetical
function $g(n)$. In this paper, we prove that we can write weighted
averages of an arbitrary fixed number $N$ of arithmetical functions
$g_{j}(n),\,j\in\left\{ 1,\dots,N\right\} $ as an integral involving
the convolution (in the sense of Laplace) of $G_{j}(x),\,j\in\left\{ 1,\dots,N\right\} $.
Furthermore, we prove an identity that allows us to obtain known results
about averages of arithmetical functions in a very simple and natural
way, and overcome some technical limitations for some well-known problems.
\end{abstract}

\maketitle

\section{introduction}

The study of properties of arithmetical functions has a central role
in number theory. Many important problems can be reformulated in terms
of some kind of property that a particular arithmetical function must
verify. Maybe, one of the most famous example is that the Riemann
Hypothesis (RH) is equivalent to the formula
\[
\psi\left(x\right)=x+O\left(x^{1/2}\log^{2}\left(x\right)\right)
\]
as $x\rightarrow+\infty$, where $\psi\left(x\right):=\sum_{n\leq x}\Lambda\left(n\right)$
and $\Lambda\left(n\right)$ is the von Mangoldt function, defined
as
\[
\Lambda\left(n\right):=\begin{cases}
\log\left(p\right), & \text{if } n=p^{m}\,\text{for some prime }p\text{ and some positive integer }m\\
0, & \text{otherwise}
\end{cases}
\]
(see, e.g., \cite{Dav}, p. 113-114). As any number theorist knows,
the pointwise control of an irregular arithmetical function may
be very difficult, so it is natural to study averages of such functions,
since it could allow us to obtain some information about the behaviour
of such sequence. This idea, obviously, can be extended to general
additive problems. For example, if $g_{1}(n)$ and $g_{2}(n)$ are
two characteristic functions of some subsets of the natural numbers,
the sum
\begin{equation}
R_{g_{1},g_{2}}\left(N\right)
:=
\sum_{n=1}^{N}g_{1}\left(n\right)g_{2}\left(N-n\right)\label{eq:discreteconv}
\end{equation}
counts the number of representations of $N$ as a sum of elements that
belong to such subsets. It is not difficult to recognise, if $g_{1}(n)=g_{2}(n)=\Lambda(n)$,
that the sign of (\ref{eq:discreteconv}) is linked to the very famous
Goldbach binary problem. Clearly, if we are not able to control the
behaviour of $R_{g_{1},g_{2}}\left(N\right)$ it is quite natural to consider
the average $\sum_{n\leq x}R_{g_{1},g_{2}}\left(n\right)$ to try
to obtain some kind of information, or even to insert a smooth weight
in the sum, to expand the tools that can be used to study these problems.

There is an extremely wide literature about results in terms of
(possibly weighted) averages of arithmetical functions and it is quite
natural to continue to search new ways to attack these type of
problems. Motivated by these intentions, in this work we provide a
general approach for the evaluation of asymptotic formulas of averages
of arithmetical functions; in particular we focused on averages of a
general additive problem.
One of the main points of this paper is that, in some sense, the information
of the weighted averages of some arithmetical functions is controlled
by the convolution of their summatory functions. If we have enough
control of such a convolution, we can ``transport'' the information
in every weighted average, assuming that the weight is sufficiently
regular.

In this paper, we present a new general identity for the weighted
average of arithmetical functions in terms of integrals that resemble Laplace convolutions: see e.g. \eqref{def-conv} below. We show that this formula is indeed interesting due to its generality and versatility. Indeed, we are able to prove
that some known results can be achieved in a very natural, fast and
elegant way using our method, and some technical barriers can be overcome,
as we will show in the last part of the article for a very well-known
additive problem.

It is important to underline that the proposed examples are not the
main part of our work; we believe such examples just exemplify the
versatility of the main formula because our aim is to show that the
principal formula can be used in different problems.

A last observation is that, for technical reasons, we have to assume
some hypotheses on the weight $f$ but we are skeptical about the fact
that they are optimal. This idea comes from the fact that the
``integral transforms tool'' can be defined in many general settings
so it would be interesting to investigate possible generalisations of our formula.
Indeed, in the last chapter of the paper we show that our formula can
be interpreted in the language of the distributions and we demonstrate
how, with this language, it is easy to prove certain known theorems in
an extremely simpler and natural way. Again, also in this chapter, the
choice of what to prove is only exemplary, what we are really
interested in is showing how the main theorem of this article allows
different interpretations that will have to be studied in depth in the
future to obtain more profound results.

\subsection{Outline of the method}

Let $G(g;x):=\sum_{n\leq x}g(n)$ be the summatory function of an arithmetical
function $g(n).$ For the rest of the paper, we will simply write $G(x):=G(g;x)$ or $G_j(x):=G(g_j;x)$ to avoid excessive notation if this does not create ambiguity. In the first part we prove that if we take a sufficiently
regular weight $f$, a natural number $d\geq2$ and a positive real
number $\lambda,$ then the average 
\[
  \sum_{n_{1}}\cdots\sum_{n_{d}}g_{1}(n_{1})\cdots g_{d}(n_{d})f
  \left(\frac{n_{1}+\dots+n_{d}}{\lambda}\right)
\]
can be written in terms of an integral involving the convolution 
$\left(G_{1}*\left(G_{2}*\dots*\left(G_{d-1}*G_{d}\right)\right)\right)$
in the sense of Laplace. This is the main result of the paper.
In order to give an idea of our results, we give conditions on the
function $f$ so that we have the identity
\[
  \sum_{n + m \le N}
    g_1(n) g_2(m) f(n + m)
  =
  \int_{0}^{N} f''(w) (G_{1} * G_{2}) (w) \, \dx w,
\]
or more general versions of this relation; see, e.g.,
Proposition \ref{prop:abel sum}.

\subsection{Applications}
In the second part we give several applications of the general formulae
that we obtain in the first part.
In particular, we prove that an asymptotic formula for the convolution
$\psi*\psi$ leads to a general asymptotic formula for the weighted
averages of the Goldbach numbers.
We show that such a formula allows us to find very easily
some well-known results, like the behaviour of the Ces\`aro averages
of the Goldbach numbers with Ces\`aro weight of any order $k>0$, or
some properties of the Dirichlet series
$\sum_{n\geq1}\frac{R\left(n\right)}{n^{s}}$.
Here we rely on several results from the papers by Languasco \&
Zaccagnini \cite{LANZAC2} and by Br\"udern, Kaczorowski \& Perelli
\cite{BKP}.

We also address a problem studied by Languasco \& Zaccagnini in
\cite{LANZAC1}, namely an asymptotic formula with many terms for the
Ces\`aro average of the number of representations of an integer as a
sum of a prime and a perfect square.
In Corollary \ref{cor-HL-gen} we show that our main result, which is
Theorem \ref{prime_integ}, implies an asymptotic formula similar to
the one in \cite{LANZAC1} for the more general problem with a perfect
power in place of just a square.

\section{main identity}

In this section we will prove a general formula that resembles the
Abel summation formula in two dimensions (see, e.g., \cite{BAPO}
and \cite{KITA}, Lemma 3.5); then, we specialise this result using
the von Mangoldt function $\Lambda(n)$ and, obviously, linking the
problem to the well-known Goldbach's conjecture. Firstly, we introduce
the following definition.

\begin{defn}
Given a finite sequence of arithmetic functions $g_{j}:\mathbb{N}\rightarrow\mathbb{R}_{0}^{+}$,
$j=1,\dots,n,\,n\geq2$ we define the functions $G_{j},\,j=1,\dots,n$
as
\[
G_{j}\left(x\right):=\begin{cases}
\sum_{n\leq x}g_{j}\left(n\right), & \text{if } x>0\\
0, & \text{otherwise}.
\end{cases}
\]
\end{defn}

Now we are able to prove the following summation formula. 
\begin{prop}
\label{prop:abel sum}Let $f:\mathbb{R}\rightarrow\mathbb{C}$ and
assume that:

- $f$ has compact support on $\left[a,b\right]$, with $a<b$;

- $f\in C^{1}\left(a,b\right)$;

- $f^{\prime}$ is absolutely continuous on $\left(a,b\right)$;

- Both $f\left(a^{+}\right)$ and $f^{\prime}\left(a^{+}\right)$
exist and are finite and $f\left(b^{-}\right)=f^{\prime}\left(b^{-}\right)=0$.

Furthermore let $g_{1},g_{2}$ be two arithmetic functions and $\lambda>0.$
Then the following summation formula holds:
\begin{align}
  \sum_{\lambda a<n\leq\lambda b}g_{2}(n)
  \sum_{m\leq\lambda b-n}g_{1}
  \left(m\right)f\left(\frac{n+m}{\lambda}\right) 
  & =
  G_{2}\left(\lambda a\right)\left[\int_{a}^{b}G_{1}\left(\lambda v-\lambda a\right)f^{\prime}\left(v\right) \, \dx v\right]
  \label{eq:abel conseq}\\
 &\quad +
 \frac{1}{\lambda}\int_{a}^{b}f^{\prime\prime}\left(w\right)\left[\int_{\lambda a}^{\lambda w}G_{2}(s)G_{1}\left(\lambda w-s\right) \, \dx s\right] \, \dx w.\nonumber 
\end{align}
\end{prop}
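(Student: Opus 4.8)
The plan is to start from the left-hand side and expose the double sum as a Riemann–Stieltjes integral against the measures $\dx G_1$ and $\dx G_2$, then integrate by parts twice, once in each variable, using the boundary behaviour of $f$. Write the inner sum over $m$ as $\sum_{m\le\lambda b-n}g_1(m) f\bigl(\tfrac{n+m}{\lambda}\bigr) = \int_{0^-}^{\lambda b-n} f\bigl(\tfrac{n+u}{\lambda}\bigr)\,\dx G_1(u)$, which is legitimate since $G_1$ is a step function and $f$ is continuous. Because $f(b^-)=0$, the upper endpoint contributes nothing when we integrate by parts, so this equals $-\tfrac1\lambda\int_{0}^{\lambda b-n} f'\bigl(\tfrac{n+u}{\lambda}\bigr) G_1(u)\,\dx u$ after noting $G_1(u)=0$ for $u\le 0$ absorbs the lower limit. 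Summing this against $g_2(n)$ for $\lambda a<n\le\lambda b$ and again rewriting as a Stieltjes integral $\int_{\lambda a^-}^{\lambda b}(\cdots)\,\dx G_2(n)$, the atom of $\dx G_2$ at the left endpoint $\lambda a$ is exactly what produces the term $G_2(\lambda a)$ in \eqref{eq:abel conseq}; the contribution of $n=\lambda a$ gives, after the substitution $u=\lambda v-\lambda a$ (i.e. $n+u=\lambda v$), precisely $G_2(\lambda a)\int_a^b G_1(\lambda v-\lambda a)f'(v)\,\dx v$, using $f'(a^+)$ finite so the boundary term is controlled and $f'(b^-)=0$ kills the other endpoint.

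For the remaining part, integrate by parts in $n$: the outer Stieltjes integral over $(\lambda a,\lambda b]$ against $\dx G_2$ becomes $-\int_{\lambda a}^{\lambda b} G_2(n)\,\dx_n\!\Bigl[-\tfrac1\lambda\int_0^{\lambda b-n} f'\bigl(\tfrac{n+u}{\lambda}\bigr)G_1(u)\,\dx u\Bigr]$, with the boundary term at $\lambda b$ vanishing because $f'(b^-)=0$ forces the bracket to vanish there, and the boundary term at $\lambda a$ already extracted above. Differentiating the inner integral with respect to its parameter $n$ produces two pieces by Leibniz: a piece from the variable upper limit, which vanishes since $G_1(0)=0$ or since $f'(b^-)=0$, and a piece $-\tfrac1{\lambda^2}\int_0^{\lambda b-n} f''\bigl(\tfrac{n+u}{\lambda}\bigr)G_1(u)\,\dx u$ coming from differentiating under the integral — here absolute continuity of $f'$ is exactly the hypothesis that licenses this. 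Changing variables via $s=n$, $w=(n+u)/\lambda$ and switching the order of integration (Fubini, all integrands being integrable on the compact triangle) converts the resulting double integral into $\tfrac1\lambda\int_a^b f''(w)\bigl[\int_{\lambda a}^{\lambda w}G_2(s)G_1(\lambda w-s)\,\dx s\bigr]\,\dx w$, which is the second term of \eqref{eq:abel conseq}.

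I expect the main obstacle to be bookkeeping the endpoint contributions correctly: the sum $\sum_{\lambda a<n\le\lambda b}$ is half-open, so $G_2$ has a jump at $\lambda a$ that must be handled with care, and one must be scrupulous about whether $\lambda a$ and $\lambda b$ are integers. The cleanest device is to work throughout with Riemann–Stieltjes integrals over half-open intervals, recording that $\int_{\lambda a^-}^{\lambda b}h\,\dx G_2 = h(\lambda a)G_2(\lambda a) + \int_{\lambda a^+}^{\lambda b}h\,\dx G_2$ isolates the advertised $G_2(\lambda a)$ factor, and to verify at each integration by parts that the hypothesis list ($f\in C^1$, $f'$ absolutely continuous, $f(b^-)=f'(b^-)=0$, $f(a^+),f'(a^+)$ finite) makes every boundary term either vanish or collapse to the stated closed-form expression. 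A secondary technical point is justifying differentiation under the integral sign in the Leibniz step, which is where absolute continuity of $f'$ (hence $f''\in L^1$) does the work; once that is in place, Fubini on the compact support region is routine.
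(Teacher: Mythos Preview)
Your approach is the paper's: Abel summation (equivalently Riemann--Stieltjes integration by parts) applied twice, then Leibniz's rule and Fubini, with the same substitutions $t=\lambda v-\lambda a$ and $t=\lambda w-s$. One localized confusion to fix: the $G_2(\lambda a)$ factor does \emph{not} come from an ``atom of $\dx G_2$ at $\lambda a$'', and your displayed identity $\int_{\lambda a^-}^{\lambda b}h\,\dx G_2 = h(\lambda a)G_2(\lambda a) + \int_{\lambda a^+}^{\lambda b}h\,\dx G_2$ is false --- the two sides differ by the jump $h(\lambda a)\bigl(G_2(\lambda a)-G_2(\lambda a^-)\bigr)=h(\lambda a)g_2(\lambda a)$ (when $\lambda a\in\mathbb{N}$), not by $h(\lambda a)G_2(\lambda a)$. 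The sum is over $\lambda a<n\le\lambda b$, so the Stieltjes integral is already $\int_{(\lambda a,\lambda b]}h\,\dx G_2$, and the $G_2(\lambda a)$ term is simply the lower boundary contribution $-h(\lambda a)G_2(\lambda a)$ from integration by parts, exactly as your second paragraph correctly says; drop the ``atom'' language in the first paragraph and the argument goes through as in the paper.
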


\begin{proof}
Let $n\in\left(\lambda a,\lambda b\right]$. Since $f$ has compact
support on $[a,b]$ and since $f\left(b^{-}\right)=0$ we get, by the
classical Abel's summation formula (see, e.g., \cite{APO}, Theorem
4.2), that
\[
\sum_{m\leq\lambda b-n}g_{1}
\left(m\right)f\left(\frac{n+m}{\lambda}\right)
=
-\frac{1}{\lambda}\int_{0}^{\lambda b-n}G_{1}\left(t\right)f^{\prime}\left(\frac{n+t}{\lambda}\right) \, \dx t.
\]
Now, multiplying both sides by $g_{2}(n)$ and summing over
all $n\in\left(\lambda a,\lambda b\right]$, we obtain
\begin{equation}
\sum_{\lambda a<n\leq\lambda b}g_{2}(n)
\sum_{m\leq\lambda b-n}g_{1}
\left(m\right)f\left(\frac{n+m}{\lambda}\right)=-\frac{1}{\lambda}\sum_{\lambda a<n\leq\lambda b}g_{2}(n)\int_{0}^{\lambda b-n}G_{1}\left(t\right)f^{\prime}\left(\frac{n+t}{\lambda}\right) \, \dx t.\label{eq:nuoveformul1}
\end{equation}
Again by the Abel summation formula, we can rewrite the RHS of (\ref{eq:nuoveformul1})
as
\begin{align*}
  -\frac{1}{\lambda}\sum_{\lambda a<n\leq\lambda b}g_{2}(n)
  &\int_{0}^{\lambda b-n}G_{1}\left(t\right)f^{\prime}\left(\frac{n+t}{\lambda}\right) \dx t
  =
  \frac{1}{\lambda}G_{2}\left(\lambda a\right)\left[\int_{0}^{\lambda b-\lambda a}G_{1}\left(t\right)f^{\prime}\left(\frac{\lambda a+t}{\lambda}\right) \, \dx t\right]\\
 & \quad +
 \frac{1}{\lambda}\int_{\lambda a}^{\lambda b}G_{2}(s)\frac{\dx}{\dx s}\left[\int_{0}^{\lambda b-s}G_{1}\left(t\right)f^{\prime}\left(\frac{s+t}{\lambda}\right) \, \dx t \right] \, \dx s.
\end{align*}
Now we observe, from the Leibniz integral rule (see \cite{Fol2},
Theorem $2.27$) and from $f^{\prime}\left(b^{-}\right)=0$, that
\[
\frac{\dx}{\dx s}\left[\int_{0}^{\lambda b-s}G_{1}\left(t\right)f^{\prime}\left(\frac{s+t}{\lambda}\right) \, \dx t\right]
=
\frac{1}{\lambda}\int_{0}^{\lambda b-s}G_{1}\left(t\right)f^{\prime\prime}\left(\frac{s+t}{\lambda}\right) \, \dx t.
\]
Hence we have
\begin{align*}
  -\frac{1}{\lambda}\sum_{\lambda a<n\leq\lambda b}g_{2}(n)
  &
  \int_{0}^{\lambda b-n}G_{1}\left(t\right)f^{\prime}\left(\frac{n+t}{\lambda}\right) \, \dx t
  =
  \frac{1}{\lambda}G_{2}\left(\lambda a\right)\left[\int_{0}^{\lambda b-\lambda a}G_{1}\left(t\right)f^{\prime}\left(\frac{\lambda a+t}{\lambda}\right) \, \dx t\right]\\
 & \quad
 +\frac{1}{\lambda^{2}}\int_{\lambda a}^{\lambda b}G_{2}(s)\int_{0}^{\lambda b-s}G_{1}\left(t\right)f^{\prime\prime}\left(\frac{s+t}{\lambda}\right)\, \dx t \, \dx s.
\end{align*}
Putting $t=\lambda v-\lambda a$ in the first integral
and $t=\lambda w-s$ in the second, we obtain the identities
\[
\frac{1}{\lambda}G_{2}\left(\lambda a\right)\left[\int_{0}^{\lambda b-\lambda a}G_{1}\left(t\right)f^{\prime}\left(\frac{\lambda a+t}{\lambda}\right) \, \dx t\right]
=
G_{2}\left(\lambda a\right)\left[\int_{a}^{b}G_{1}\left(\lambda v-\lambda a\right)f^{\prime}\left(v\right) \, \dx v\right]
\]
and
\[
\frac{1}{\lambda^{2}}\int_{\lambda a}^{\lambda b}G_{2}(s)\int_{0}^{\lambda b-s}G_{1}\left(t\right)f^{\prime\prime}\left(\frac{s+t}{\lambda}\right) \, \dx t \, \dx s=\frac{1}{\lambda}\int_{\lambda a}^{\lambda b}G_{2}(s)\int_{\frac{s}{\lambda}}^{b}G_{1}\left(\lambda w-s\right)f^{\prime\prime}\left(w\right) \, \dx w \, \dx s
\]
\[
=\frac{1}{\lambda}\int_{\lambda a}^{\lambda b}G_{2}(s)\int_{a}^{b}G_{1}\left(w\lambda-s\right)f^{\prime\prime}\left(w\right) \, \dx w \, \dx s
=
\frac{1}{\lambda}\int_{a}^{b}f^{\prime\prime}\left(w\right)\int_{\lambda a}^{\lambda b}G_{2}(s)G_{1}\left(\lambda w-s\right) \, \dx s \, \dx w,
\]
where the second identity follows from the fact that $G_{1}\left(w\lambda-s\right)\equiv0$
if $w<\frac{s}{\lambda}$ and the last identity by Fubini's theorem.
So, finally, we can write that
\begin{align*}
\sum_{\lambda a<n\leq\lambda b}g_{2}(n)\sum_{m\leq\lambda b-n}g_{1}\left(m\right)f\left(\frac{n+m}{\lambda}\right) 
& =
\frac{1}{\lambda}G_{2}\left(\lambda a\right)\left[\int_{0}^{\lambda b-\lambda a}G_{1}\left(t\right)f^{\prime}\left(\frac{\lambda a+t}{\lambda}\right) \, \dx t\right]\\
 & \quad+
 \frac{1}{\lambda}\int_{a}^{b}f^{\prime\prime}\left(w\right)\int_{\lambda a}^{\lambda b}G_{2}(s)G_{1}\left(\lambda w-s\right) \, \dx s \, \dx w
\end{align*}
and now it remains to observe that 
\begin{align*}
\int_{\lambda a}^{\lambda b}G_{2}(s)G_{1}\left(w\lambda-s\right) \, \dx s 
& =
\int_{\lambda a}^{\lambda w}G_{2}(s)G_{1}\left(w\lambda-s\right) \, \dx s+\int_{\lambda w}^{\lambda b}G_{2}(s)G_{1}\left(\lambda w-s\right) \, \dx s\\
 & =
 \int_{\lambda a}^{\lambda w}G_{2}(s)G_{1}\left(\lambda w-s\right) \, \dx s
\end{align*}
since $G_{1}\left(w\lambda-s\right)\equiv0$ if $s\geq w\lambda$.
\end{proof}
\begin{rem}
\label{rem: variante}It is interesting to note that the previous
formula is quite versatile in the following sense: from the previous
proof we can obtain some new version of the main formula with small
modifications. For example, in the case $a\leq0$, (\ref{eq:abel conseq})
can be written in the following compact way 
\begin{equation}
  \sum_{n\leq\lambda b}\sum_{m\leq\lambda b-n}g_{2}
  \left(n\right)g_{1}\left(m\right)f\left(\frac{n+m}{\lambda}\right)=\frac{1}{\lambda}\int_{0}^{b}f^{\prime\prime}\left(w\right)\left(G_{1}*G_{2}\right)\left(\lambda w\right) \, \dx w,
  \label{eq:convpulita}
\end{equation}
where
\begin{equation}
\label{def-conv}
  \left(G_{1}*G_{2}\right)\left(x\right)
  :=
  \int_{0}^{x}G_{1}\left(x-s\right)G_{2}\left(s\right) \, \dx s
\end{equation}
is the classical convolution in the sense of Laplace. Furthermore,
it is not difficult to observe, retracing the proof of the previous
lemma, that if we assume that $f$, $f^{\prime}$, 
$f^{\prime\prime}\,:\,\left[0,+\infty\right)\rightarrow\left[0,+\infty\right)$
do not have compact support but decay at infinity sufficiently quickly,
then we get an analogous formula of (\ref{eq:abel conseq}) or (\ref{eq:convpulita}),
that is
\begin{align}
  \sum_{n>\lambda a}g_{2}(n)
  \sum_{m\geq1}g_{1}\left(m\right)f\left(\frac{n+m}{\lambda}\right) 
  & =
  G_{2}\left(\lambda a\right)\left[\int_{a}^{+\infty}G_{1}\left(\lambda v-\lambda a\right)f^{\prime}\left(v\right) \, \dx v\right]
  \label{eq:abel conseq-1}\\
 & \quad+
 \frac{1}{\lambda}\int_{a}^{+\infty}f^{\prime\prime}\left(w\right)\int_{\lambda a}^{\lambda w}G_{2}(s)G_{1}\left(\lambda w-s\right) \, \dx s \, \dx w\nonumber 
\end{align}
or, if $a\leq0$,
\begin{equation*}
\sum_{n\geq1}
\sum_{m\geq1}g_{2}
\left(n\right)g_{1}\left(m\right)f\left(\frac{n+m}{\lambda}\right)=\frac{1}{\lambda}\int_{0}^{+\infty}f^{\prime\prime}\left(w\right)\left(G_{1}*G_{2}\right)\left(\lambda w\right) \, \dx w.
%\label{eq:convclas2}
\end{equation*}
Clearly, the hypothesis ``decay at infinity sufficiently quickly''
depends on the growth of $G_{j}\left(w\right),\,j=1,2$. For example,
if there exists an $\alpha>0$ such that $G_{j}\left(w\right)\ll w^{\alpha}$
as $w\rightarrow+\infty$ a possible choice of $f$ is
\[
f\left(w\right)
=
e^{-w}h\left(w\right)
\]
where $h\left(w\right)$ is $C^2$ and $h$, $h'$, $h''$ are bounded functions.
\end{rem}

We have proved the following statement: if you have sufficient information
on the convolution of averages of two arithmetical functions, you can
obtain information about the double sum/series of the same arithmetical
functions. It is not difficult to generalise the previous results to
an arbitrary fixed number of summands. Indeed, we can prove the following

\begin{cor}
Let $f:\mathbb{R}\rightarrow\mathbb{C}$ and $d\in\mathbb{N}_{>1}$
and assume that

- $f$ has compact support on $\left[a,b\right],\,a\leq0$ and $a<b$;

- $f\in C^{d-1}\left(a,b\right)$;

- $f^{\left(d-1\right)}$ is absolutely continuous on $(a,b)$;

- $f\left(a^{+}\right)$, $f^{\prime}\left(a^{+}\right),\dots,f^{\left(d-1\right)}\left(a^{+}\right)$
exist and are finite and $f\left(b^{-}\right)=f^{\prime}\left(b^{-}\right)=\dots=f^{\left(d-1\right)}\left(b^{-}\right)=0$.

Furthermore let $g_{1},\dots,g_{d}$ be arithmetic functions and $\lambda>0.$
Then, the following summation formula holds
\begin{align}
\sum_{n_{1}
\leq
\lambda b}\sum_{n_{2}\leq\lambda b-n_{1}}
&
\cdots\sum_{n_{d}\leq\lambda b-n_{1}-\dots-n_{d-1}}g_{1}(n_{1})\cdots g_{d}\left(n_{d}\right)f\left(\frac{n_{1}+\dots+n_{d}}{\lambda}\right)\label{eq:abel conseq-1-1}
\\
& =
\frac{\left(-1\right)^{d}}{\lambda^{d-1}}
\int_{0}^{b}f^{\left(d\right)}\left(w\right)\left(G_{1}*\left(G_{2}*\dots*\left(G_{d-1}*G_{d}\right)\right)\right)\left(\lambda w\right) \, \dx w.\nonumber
\end{align}
\end{cor}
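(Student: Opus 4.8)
The plan is to prove the corollary by induction on $d$, using Proposition \ref{prop:abel sum} (in the clean form \eqref{eq:convpulita}, valid since $a\le 0$) as the base case $d=2$. The key observation that drives the induction is that the innermost sum over $n_d$, for fixed values of $n_1,\dots,n_{d-1}$, is itself an instance of the two-variable identity: if we set $M:=\lambda b-n_1-\dots-n_{d-1}$ and treat $n:=n_1+\dots+n_{d-1}$ as a parameter, then
\[
  \sum_{n_d\le M} g_d(n_d) f\!\left(\frac{n+n_d}{\lambda}\right)
  =
  -\frac{1}{\lambda}\int_0^{M} G_d(t)\, f'\!\left(\frac{n+t}{\lambda}\right)\dx t,
\]
by Abel summation together with $f'(b^-)=0$, exactly as in the first display of the proof of Proposition \ref{prop:abel sum}. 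Substituting this back in, the whole $d$-fold sum becomes $-\frac{1}{\lambda}$ times a $(d-1)$-fold sum of the same shape but with $d-1$ arithmetic functions $g_1,\dots,g_{d-1}$ and with the weight $f$ replaced by $f'$ (whose support is still contained in $[a,b]$, which is one-derivative less regular and satisfies the same vanishing conditions at $b^-$ with one fewer derivative). This is the reduction step.

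Next I would set up the induction carefully. The inductive hypothesis at level $d-1$, applied to the functions $g_1,\dots,g_{d-1}$ and weight $f'$, would give
\[
  \sum_{n_1\le\lambda b}\cdots\sum_{n_{d-1}\le\lambda b-n_1-\dots-n_{d-2}}
    g_1(n_1)\cdots g_{d-1}(n_{d-1})\, f'\!\left(\frac{n_1+\dots+n_{d-1}}{\lambda}\right)
  =
  \frac{(-1)^{d-1}}{\lambda^{d-2}}\int_0^b f^{(d)}(w)\,\bigl(G_1*(G_2*\dots*(G_{d-2}*G_{d-1}))\bigr)(\lambda w)\,\dx w.
\]
However, the reduction above does not literally produce $f'$ composed with a single argument: it produces the expression $\int_0^{M}G_d(t)f'((n+t)/\lambda)\,\dx t$, which depends on $n=n_1+\dots+n_{d-1}$ through a convolution-type integral against $G_d$. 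So the honest statement to induct on is not the corollary as written but a slightly more flexible intermediate identity in which the role of ``$f$'' is played by a function of the form $w\mapsto \int_0^{\lambda b-\lambda w}G_d(t)f'(w+t/\lambda)\,\dx t$, and one must check that this function inherits compact support in $[a,b]$ (it vanishes for $w\ge b$ because the integration range collapses, and for $w\le a$ it agrees with the full convolution $\int_0^{\lambda(b-w)}$ since $a\le 0$), the appropriate smoothness (each derivative in $w$ either differentiates $f'$ inside — producing $f''$, etc. — or hits the variable endpoint, but the boundary term vanishes because $f'(b^-)=0$, by the same Leibniz-rule computation as in Proposition \ref{prop:abel sum}), and the vanishing of its derivatives at $b^-$.

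After the reduction, the remaining work is bookkeeping: one must identify the nested integral that emerges — an integral against $f^{(d)}$ of a $(d-1)$-fold iterated convolution of $G_1,\dots,G_{d-1}$, further convolved (in the Laplace sense) against $G_d$ — with $\bigl(G_1*(G_2*\dots*(G_{d-1}*G_d))\bigr)(\lambda w)$. This is where the associativity and commutativity of Laplace convolution, together with repeated use of Fubini's theorem and the change of variables $t=\lambda v$, do the job: each time we pass from level $k$ to level $k-1$ we pick up one factor of $-1/\lambda$ and one more $G$ inside the convolution, and after $d-1$ steps we arrive at $(-1)^d/\lambda^{d-1}$ and the full $d$-fold convolution, exactly the stated right-hand side. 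The vanishing of all the boundary terms at $a$ (using $a\le 0$, so that $G_j(\lambda a)$ contributes only when $a=0$, i.e. $G_j(0)=0$) is what makes the compact form \eqref{eq:convpulita} rather than the longer \eqref{eq:abel conseq} the right object to iterate; retaining the $a>0$ version would force us to track a growing number of leftover boundary integrals and the induction would be far messier.

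The main obstacle I anticipate is precisely formulating the correct inductive statement: the naive induction on \eqref{eq:abel conseq-1-1} breaks down because after one reduction step the ``weight'' is no longer a function of the single variable $(n_1+\dots+n_{d-1})/\lambda$ but a $G_d$-smoothed version of it. The cleanest fix is to prove, by induction, the more general identity
\[
  \sum_{n_1\le\lambda b}\cdots\sum_{n_k\le\lambda b-n_1-\dots-n_{k-1}}
    g_1(n_1)\cdots g_k(n_k)\,\varphi\!\left(\frac{n_1+\dots+n_k}{\lambda}\right)
  =
  \frac{(-1)^{k}}{\lambda^{k-1}}\int_0^b \varphi^{(k)}(w)\,\bigl(G_1*(G_2*\dots*(G_{k-1}*G_k))\bigr)(\lambda w)\,\dx w
\]
for every $k\le d$ and every $\varphi$ satisfying the hypotheses of the corollary with $d$ replaced by $k$ — and then deduce the stated result by taking $k=d$, $\varphi=f$. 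With the statement framed this way the induction closes cleanly: the reduction step rewrites the $k$-fold sum as $-\tfrac1\lambda$ times a $(k-1)$-fold sum against a new weight $\varphi_1(w):=\int_0^{\lambda b-\lambda w}G_k(t)\,\varphi'(w+t/\lambda)\,\dx t$, one verifies $\varphi_1$ meets the hypotheses at level $k-1$ (the verification of $\varphi_1(b^-)=\varphi_1'(b^-)=\dots=0$ and of the identity $\varphi_1^{(k-1)}(w)=\lambda^{-(k-1)}\int_0^{\lambda b-\lambda w}G_k(t)\varphi^{(k)}(w+t/\lambda)\,\dx t$ being the one genuinely delicate computation, done exactly as the Leibniz-rule step in Proposition \ref{prop:abel sum}), applies the inductive hypothesis, and finally recognizes $\int_0^{\lambda w}\bigl(G_1*\dots*G_{k-1}\bigr)(\lambda w - s)\,G_k(s)\,\dx s$ as $\bigl(G_1*(\dots*(G_{k-1}*G_k))\bigr)(\lambda w)$ by associativity. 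Everything else is routine.
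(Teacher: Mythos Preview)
Your induction is correct but peels variables from the opposite end compared to the paper. You remove the \emph{innermost} sum over $n_d$ via Abel summation, producing a convolved weight $\varphi_1(w)=\int_0^{\lambda(b-w)}G_d(t)\,\varphi'(w+t/\lambda)\,\dx t$, and then invoke the inductive hypothesis at level $d-1$ on $g_1,\dots,g_{d-1}$ with this new weight. The paper instead freezes the \emph{outermost} variable $n_1$, observes that the inner $(d-1)$-fold sum over $n_2,\dots,n_d$ is precisely an instance of the corollary at level $d-1$ with the translated weight $f_{n_1,\lambda}(x):=f(n_1/\lambda+x)$ on $[-n_1/\lambda,\,b-n_1/\lambda]$, applies the inductive hypothesis there, and only afterwards does one Abel summation over $n_1$. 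The paper's ordering is a bit lighter: a pure translate of $f$ inherits all regularity and boundary-vanishing for free, so no Leibniz-rule verification is needed before the final step, whereas your route pays that price up front in checking that the convolved $\varphi_1$ is $C^{k-2}$ with $\varphi_1^{(j)}(b^-)=0$. Both approaches reach the same Fubini/associativity endgame identifying the nested integral with the $d$-fold Laplace convolution.

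Two minor slips to correct when you write it out. The boundary term in your very first Abel step vanishes because $f(b^-)=0$, not $f'(b^-)=0$. And differentiating $\varphi_1$ in $w$ introduces no factor of $\lambda^{-1}$: the Leibniz boundary term dies and $\partial_w\varphi'(w+t/\lambda)=\varphi''(w+t/\lambda)$ with unit coefficient, so in fact $\varphi_1^{(k-1)}(w)=\int_0^{\lambda(b-w)}G_k(t)\,\varphi^{(k)}(w+t/\lambda)\,\dx t$ without your $\lambda^{-(k-1)}$. With that correction the $\lambda$-bookkeeping is exactly one $-1/\lambda$ from the initial Abel step times $(-1)^{k-1}/\lambda^{k-2}$ from the inductive hypothesis, giving the claimed $(-1)^k/\lambda^{k-1}$.
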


\begin{proof}
We prove this formula using induction. For $d=2$ is the previous
Proposition, so assume that formula (\ref{eq:abel conseq-1-1}) holds
for $d-1$, $d\geq3$. Then, take $n_{1}\in\mathbb{N}$ and $\lambda>0$
and define $f_{n_{1},\lambda}\left(x\right):=f\left(\frac{n_{1}}{\lambda}+x\right):\left[-\frac{n_{1}}{\lambda},b-\frac{n_{1}}{\lambda}\right]\rightarrow\mathbb{R}_{0}^{+}$.
Clearly, for every $n_{1}\in\mathbb{N},$ we have that $f_{n_{1},\lambda}$
verifies the hypotheses of Proposition \ref{prop:abel sum} and since
$a=-\frac{n_{1}}{\lambda}<0,$ we have from the inductive hypothesis
that
\begin{align*}
\sum_{n_{2}
\leq
\lambda b-n_{1}}&
\cdots\sum_{n_{d}\leq\lambda b-n_{1}-\dots-n_{d-1}}g_{2}\left(n_{2}\right)\cdots g_{d}\left(n_{d}\right)f\left(\frac{n_{1}+n_{2}+\dots+n_{d}}{\lambda}\right)
\\
&=
\sum_{n_{2}
\leq
\lambda b-n_{1}}\cdots\sum_{n_{d}\leq\lambda b-n_{1}-\dots-n_{d-1}}g_{2}\left(n_{2}\right)\cdots g_{d}\left(n_{d}\right)f_{n_{1},\lambda}\left(\frac{n_{2}+\dots+n_{d}}{\lambda}\right)
\\
&=
\frac{\left(-1\right)^{d-1}}{\lambda^{d-2}}
\int_{0}^{b-\frac{n_{1}}{\lambda}}f_{n_{1},\lambda}^{\left(d-1\right)}\left(w\right)\left(G_{2}*\left(G_{3}*\dots*\left(G_{d-1}*G_{d}\right)\right)\right)\left(\lambda w\right) \, \dx w
\\
&=
\frac{\left(-1\right)^{d-1}}{\lambda^{d-2}}
\int_{0}^{b-\frac{n_{1}}{\lambda}}f^{\left(d-1\right)}\left(\frac{n_{1}}{\lambda}+w\right)\left(G_{2}*\left(G_{3}*\dots*\left(G_{d-1}*G_{d}\right)\right)\right)\left(\lambda w\right) \, \dx w.
\end{align*}
So, if we multiply both sides by $g_{1}\left(n_{1}\right)$ and summing over all $n_1\le \lambda b$, we get
\begin{align*}
\sum_{n_{1}
\leq
\lambda b}&
\sum_{n_{2}\leq\lambda b-n_{1}}\cdots\sum_{n_{d}\leq\lambda b-n_{1}-\dots-n_{d-1}}g_{1}\left(n_{1}\right)g_{2}\left(n_{2}\right)\cdots g_{d}\left(n_{d}\right)f\left(\frac{n_{1}+n_{2}+\dots+n_{d}}{\lambda}\right)
\\
&=
\frac{\left(-1\right)^{d-1}}{\lambda^{d-2}}\sum_{n_{1}\leq\lambda b}g_{1}\left(n_{1}\right)
\int_{0}^{b-\frac{n_{1}}{\lambda}}f^{\left(d-1\right)}\left(w+\frac{n_{1}}{\lambda}\right)\left(G_{2}*\left(G_{3}*\dots*\left(G_{d-1}*G_{d}\right)\right)\right)\left(\lambda w\right) \, \dx w,
\end{align*}
and now it is enough to use the same technique of the proof
of Proposition \ref{prop:abel sum}. Indeed we get
\begin{align}
&\frac{\left(-1\right)^{d-1}}{\lambda^{d-2}}
\sum_{n_{1}\leq\lambda b}g_{1}\left(n_{1}\right)
\int_{0}^{b-\frac{n_{1}}{\lambda}}f^{\left(d-1\right)}\left(w+\frac{n_{1}}{\lambda}\right)\left(G_{2}*\left(G_{3}*\dots*\left(G_{d-1}*G_{d}\right)\right)\right)\left(\lambda w\right) \, \dx w \nonumber
\\
&=
\frac{\left(-1\right)^{d}}{\lambda^{d-2}}
\int_{0}^{\lambda b}G_{1}\left(s\right)\frac{\dx}{\dx s}\left(\int_{0}^{b-\frac{s}{\lambda}}f^{\left(d-1\right)}\left(w+\frac{s}{\lambda}\right)\left(G_{2}*\left(G_{3}*\dots*\left(G_{d-1}*G_{d}\right)\right)\right)\left(\lambda w\right) \, \dx w\right) \, \dx s \nonumber
\\
&=
\frac{\left(-1\right)^{d}}{\lambda^{d-1}}
\int_{0}^{\lambda b}G_{1}\left(s\right)\int_{\frac{s}{\lambda}}^{b}f^{\left(d\right)}\left(v\right)\left(G_{2}*\left(G_{3}*\dots*\left(G_{d-1}*G_{d}\right)\right)\right)\left(\lambda v-s\right) \, \dx v \, \dx s.\label{eq:corollario conv1}
\end{align}
Now, since 
\begin{equation}
\left(G_{2}*\left(G_{3}*\dots*\left(G_{d-1}*G_{d}\right)\right)\right)\left(\lambda v-s\right)\equiv0\label{eq:conv eq to 0}
\end{equation}
if $v<\frac{s}{\lambda}$ we can extend the domain of the inner integral
from $\left[\frac{s}{\lambda},b\right]$ to $\left[0,b\right]$ and
then apply the Fubini theorem, so we can write (\ref{eq:corollario conv1})
as
\begin{equation}
\frac{\left(-1\right)^{d}}{\lambda^{d-1}}
\int_{0}^{b}f^{\left(d\right)}\left(v\right)\int_{0}^{\lambda b}G_{1}\left(s\right)\left(G_{2}*\left(G_{3}*\dots*\left(G_{d-1}*G_{d}\right)\right)\right)\left(\lambda v-s\right) \, \dx s \, \dx v.\label{eq:generaliz first prop1}
\end{equation}
Now the condition for (\ref{eq:conv eq to 0}), that is, $v<\frac{s}{\lambda}$,
obviously is equivalent to $s>\lambda v$. Hence we can reduce the
domain of the inner integral of (\ref{eq:generaliz first prop1})
from $\left[0,\lambda b\right]$ to $\left[0,\lambda v\right]$, hence
we get that (\ref{eq:generaliz first prop1}) is equal to
\[
\frac{\left(-1\right)^{d}}{\lambda^{d-1}}
\int_{0}^{b}f^{\left(d\right)}\left(v\right)\int_{0}^{\lambda v}G_{1}\left(s\right)\left(G_{2}*\left(G_{3}*\dots*\left(G_{d-1}*G_{d}\right)\right)\right)\left(\lambda v-s\right) \, \dx s \, \dx v
\]
\[
=
\frac{\left(-1\right)^{d}}{\lambda^{d-1}}
\int_{0}^{b}f^{\left(d\right)}\left(v\right)\left(G_{1}*\left(G_{2}*\left(G_{3}*\dots*\left(G_{d-1}*G_{d}\right)\right)\right)\right)\left(\lambda v\right) \, \dx s \, \dx v
\]
as wanted.
\end{proof}
As mentioned before, the convolution of summatory functions encompasses,
essentially, the information of the weighted averages. In the next
Lemma, we show that the convolution of $d$ summatory functions with
$d\geq2$ is the Ces\`aro average of order $k=d-1$ of the function
that counts the representations of an integer as a sum of elements defined
by the $d$ arithmetical functions.
\begin{lem}
Let $d\geq2$ be a natural number, $g_{i}:\mathbb{N}\rightarrow\mathbb{C},i=1,\dots,d$ arithmetical functions, $\text{g}:=\left(g_{1},\dots,g_{d}\right)$
and $x\in\mathbb{R}_{0}^{+}.$ Consider
\[
G_{i}(x):=\sum_{n\leq x}g_{i}(n),
\qquad
\mathcal{G}\left(n\right)=\mathcal{G}\left(n,\text{g}\right)
:=
\sum_{m_{1}+\dots+m_{d}=n}g_{1}\left(m_{1}\right) \cdots g_{d}\left(m_{d}\right).
\]
Then, for $x\geq0$, we have
\begin{equation}
\frac{1}{\left(d-1\right)!}
\sum_{n\leq x}\mathcal{G}
\left(n\right)\left(x-n\right)^{d-1}=\left(G_{1}*\left(G_{2}*\dots*\left(G_{d-1}*G_{d}\right)\right)\right)\left(x\right).\label{eq: conv psi gen}
\end{equation}
\end{lem}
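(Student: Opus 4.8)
The plan is to prove \eqref{eq: conv psi gen} by induction on $d$, in the same spirit as the proof of the previous Corollary; the only analytic ingredient needed is the elementary ``power--raising'' identity
\[
\int_{\alpha}^{\beta}(\beta-s)^{k-1}\,\dx s=\frac{(\beta-\alpha)^{k}}{k}\qquad(\alpha\le\beta,\ k\ge 1),
\]
together with the convention that each $G_{j}$ vanishes on $(-\infty,0]$. For the base case $d=2$ I would start from the definition \eqref{def-conv}, insert the defining sums $G_{1}(x-s)=\sum_{m_{1}\le x-s}g_{1}(m_{1})$ and $G_{2}(s)=\sum_{m_{2}\le s}g_{2}(m_{2})$, interchange the summations with the integral (legitimate because for fixed $x$ only finitely many $m_{1},m_{2}$ contribute), and compute the Lebesgue measure of $\{s\in[0,x]:s\ge m_{2},\ s\le x-m_{1}\}$, which equals $x-m_{1}-m_{2}$ when $m_{1}+m_{2}\le x$ and $0$ otherwise. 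Grouping the resulting terms according to the value of $n=m_{1}+m_{2}$ gives exactly $\sum_{n\le x}\mathcal{G}(n)(x-n)$, i.e.\ the case $d=2$.

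For the inductive step, suppose \eqref{eq: conv psi gen} holds for $d-1\ge 2$ functions. I would peel off the outermost factor, writing the right-hand side of \eqref{eq: conv psi gen} as $(G_{1}*H)(x)$ with $H:=G_{2}*(G_{3}*\dots*(G_{d-1}*G_{d}))$ and $\mathcal{G}'(k):=\sum_{m_{2}+\dots+m_{d}=k}g_{2}(m_{2})\cdots g_{d}(m_{d})$, so that the inductive hypothesis reads $H(y)=\tfrac{1}{(d-2)!}\sum_{k\le y}\mathcal{G}'(k)(y-k)^{d-2}$. Expanding $G_{1}(s)=\sum_{m_{1}\le s}g_{1}(m_{1})$ in $(G_{1}*H)(x)=\int_{0}^{x}G_{1}(s)H(x-s)\,\dx s$ and exchanging the finite sums with the integrals reduces everything to the inner integral $\int_{m_{1}}^{x-k}(x-s-k)^{d-2}\,\dx s$, which by the displayed identity equals $(x-m_{1}-k)^{d-1}/(d-1)$ when $m_{1}+k\le x$ and $0$ otherwise. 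Collecting terms with $n=m_{1}+k$ and using $\sum_{m_{1}+k=n}g_{1}(m_{1})\mathcal{G}'(k)=\mathcal{G}(n)$ then produces $\tfrac{1}{(d-1)!}\sum_{n\le x}\mathcal{G}(n)(x-n)^{d-1}$, completing the induction.

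The step I expect to require the most care — though it is bookkeeping rather than a genuine obstacle — is keeping track of the truncated domains of integration coming from the convention $G_{j}\equiv 0$ on $(-\infty,0]$ (equivalently, the appearance of the positive part $(x-m_{1}-\dots-m_{d})_{+}$), and making sure the interchanges of the finite sums over the $m_{i}$ with the integrals are stated correctly; both are harmless since each $G_{j}$ is locally bounded and, for fixed $x$, all sums involved are finite. As a remark, one can also bypass the induction entirely: expanding every $G_{j}$ in the $(d-1)$-fold iterated integral that defines $G_{1}*(G_{2}*\dots*(G_{d-1}*G_{d}))$ and carrying out the obvious change of variables reduces the claim to the classical fact that the simplex $\{u\in\mathbb{R}_{\ge 0}^{d-1}:u_{1}+\dots+u_{d-1}\le R\}$ has volume $R^{d-1}/(d-1)!$ for $R\ge 0$, which recovers \eqref{eq: conv psi gen} in one stroke.
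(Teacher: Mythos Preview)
Your proof is correct, but it proceeds differently from the paper's argument. Both proofs use induction on $d$; however, you expand each $G_j$ as its defining sum, interchange the (finite) sums with the integral, and compute the resulting one-dimensional integral directly via the power-raising identity. The paper instead observes that both sides of \eqref{eq: conv psi gen} are \emph{multilinear} operators in $(g_1,\dots,g_d)$ and therefore reduces the verification to the basis of Kronecker-delta sequences $\Delta_{N_j}(n)=\delta_{n,N_j}$; for such a basis element $\mathcal{G}(n)=\Delta_{N_1+\dots+N_d}(n)$ and each $G_j$ is a Heaviside step at $N_j$, so both sides become the single explicit function $(x-N)_+^{d-1}/(d-1)!$ with $N=N_1+\dots+N_d$. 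Your route is more hands-on and transparent (and your closing remark reducing the whole claim to the volume $R^{d-1}/(d-1)!$ of the standard simplex is a clean way to avoid the induction altogether), while the paper's multilinearity argument is more structural: it isolates the identity to a one-parameter family of test inputs and thereby makes the inductive computation essentially immediate. Neither approach requires anything beyond elementary calculus; they simply organise the same bookkeeping in different ways.
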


\begin{proof}
We prove this Lemma by induction. We start with $d=2$ and we consider
the operators $\mathcal{L}_{x}=\mathcal{L}_{x}\left(\text{g}\right)$
and $\mathcal{M}_{x}=\mathcal{M}_{x}\left(\text{g}\right)$ which
associate to $x$ the first and second members of (\ref{eq: conv psi gen}),
respectively. It is easy to see that $\mathcal{L}_{x}$ and $\mathcal{M}_{x}$
are bilinear operators, so, for proving (\ref{eq: conv psi gen}),
it is enough to show it for the following basis
\[
\mathcal{B}
=
\left\{ \left(\Delta_{N_{1}},\Delta_{N_{2}}\right):N_{1},N_{2}\in\mathbb{N}\right\} 
\]
where $\Delta_{N}=\Delta_{N}(n):=\delta_{n,N}$ and $\delta_{i,j}$ is Kronecker's delta function. Then, if $\left(\Delta_{N_{1}},\Delta_{N_{2}}\right)\in\mathcal{B}$,
we get
\[
\mathcal{G}\left(n\right)=\begin{cases}
1, & \text{if } n=N_{1}+N_{2}\\
0, & \text{otherwise,}
\end{cases}
\]
that is, $\mathcal{G}\left(n\right)=\Delta_{N_{1}+N_{2}}$. Furthermore
\[
G_{i}\left(x\right)=\begin{cases}
0, & \text{if } x<N_{i}\\
1, & \text{if } x\geq N_{i}
\end{cases}
\]
for $i=1,2$. Then 
\[
\mathcal{L}_{x}\left(\Delta_{N_{1}},\Delta_{N_{2}}\right)=\begin{cases}
0, & \text{if } x<N_{1}+N_{2}\\
x-\left(N_{1}+N_{2}\right), & \text{if } x\geq N_{1}+N_{2}.
\end{cases}
\]
In addition, recalling the definition of $G_1 * G_2$ in \eqref{def-conv}, we note that
\[
\mathcal{M}_{x}\left(\text{g}\right)=\begin{cases}
0, & \text{if } x<N_{1}+N_{2}\\
x-\left(N_{1}+N_{2}\right), & \text{if } x\geq N_{1}+N_{2}
\end{cases}
\]
and this the base case. Now, assume $d\geq3$ and, again consider
the operators $\mathcal{L}_{x}=\mathcal{L}_{x}\left(\text{g}\right)$
and $\mathcal{M}_{x}=\mathcal{M}_{x}\left(\text{g}\right)$ which
associate to $x$ the first and second members of (\ref{eq: conv psi gen}),
respectively. In this case, we can observe that $\mathcal{L}_{x},\,\mathcal{M}_{x}$
are multilinear operators. Hence it suffices to verify (\ref{eq: conv psi gen})
on a basis of the complex vector space of $d$-tuples of complex
sequences. As in the previous case, we choose
\[
\mathcal{B}
=
\left\{ \left(\Delta_{N_{1}},\dots,\Delta_{N_{d}}\right):N_{1},\dots,N_{d}\in\mathbb{N}\right\}. 
\]
For the sake of brevity,
we write $N=N_{1}+\dots+N_{d}$ throughout the proof. We remark that
if $\left(\Delta_{N_{1}},\dots,\Delta_{N_{d}}\right)\in\mathcal{B}$,
then
\[
\mathcal{G}\left(n\right)=\begin{cases}
1, & \text{if } n=N\\
0, & \text{otherwise,}
\end{cases}
\]
that is, $\mathcal{G}\left(n\right)=\Delta_{N}.$ Hence
\[
\mathcal{L}_{x}\left(\Delta_{N_{1}},\dots,\Delta_{N_{d}}\right)=\begin{cases}
0, & \text{if } x<N\\
\frac{\left(x-N\right)^{d-1}}{\left(d-1\right)!}, & \text{if } x\geq N.
\end{cases}
\]
We now turn to $\mathcal{M}_{x}$. Let $M=N-N_{1}.$ For $d\geq 3$ we assume inductively
that 
\[
\left(G_{2}*\left(G_{3}*\dots*\left(G_{d-1}*G_{d}\right)\right)\right)\left(x\right)=\begin{cases}
0, & \text{if } x<M\\
\frac{\left(x-M\right)^{d-2}}{\left(d-2\right)!}, & \text{if } x\geq M.
\end{cases}
\]
Now, note that $\mathcal{M}_{x}\left(\Delta_{N_{1}},\dots,\Delta_{N_{d}}\right)=0$
if $x<N$ because in the integral
\[
\int_{0}^{x}G_{1}\left(x-s\right)\left(G_{2}*\left(G_{3}*\dots*\left(G_{d-1}*G_{d}\right)\right)\right)\left(s\right) \, \dx s
\]
either $G_{1}\left(x-s\right)=0$ or $\left(G_{2}*\left(G_{3}*\dots*\left(G_{d-1}*G_{d}\right)\right)\right)\left(s\right)=0$.
If $x\geq N$, then
\begin{align*}
\int_{0}^{x}G_{1}\left(x-s\right)\left(G_{2}*\left(G_{3}*\dots*\left(G_{d-1}*G_{d}\right)\right)\right)\left(s\right) \, \dx s 
& =
\int_{M}^{x-N_{1}}\frac{\left(s-M\right)^{d-2}}{\left(d-2\right)!} \, \dx s\\
 & =
 \frac{\left(x-N\right)^{d-1}}{\left(d-1\right)!},
\end{align*}
and thus $\mathcal{L}_{x},\,\mathcal{M}_{x}$ agree on the whole space.
The same results may be obtained, in a less elementary fashion, using
the Laplace convolution and inversion theorem.
\end{proof}

\section{the case \texorpdfstring{$g_{1}(n)=g_{2}(n)=\Lambda(n)$}{g1g2 = Lambda}}

In this section we will analyse the important case
$g_{1}(n)=g_{2}(n)=\Lambda(n)$. We will show that using well-known
facts about the Ces\`aro average of Goldbach numbers with $k=1$, which matches 
the convolution $(\psi*\psi)(x)$ and the results of the previous section, we are able to
provide new general formulas for weighted averages of Goldbach's numbers.
Regarding these topics,we recall the results in Languasco \& Zaccagnini \cite{LANZAC2}
and Br\"udern, Kaczorowski \& Perelli \cite{BKP}; In particular, some results 
of the last one we will be useful for our work.
We let
\begin{equation}
\label{def-RG}
  R_G(n)
  :=
  \sum_{m_{1}+m_{2}=n}\Lambda\left(m_{1}\right)\Lambda\left(m_{2}\right)
  \qquad\text{and}\qquad
  C_k(x)
  :=
  \frac1{\Gamma(k + 1)}
  \sum_{n \le x} (x - n)^k R_G(n),
\end{equation}
where $k \in \mathbb{R}^+$.
We will show that our formula and a sufficient knowledge of the
behaviour of the convolution of the function $\psi(x)$
with itself allow to reprove some well-known facts about the function
$R_G(n)$
in a very easy and natural way. Indeed, in this case formula (\ref{eq:convpulita})
becomes
\begin{equation}
  \Sigma(f; \lambda; [0,b])
  :=
  \sum_{n\leq\lambda b}\sum_{m\leq\lambda b-n}\Lambda\left(n\right)\Lambda\left(m\right)f\left(\frac{n+m}{\lambda}\right)
  =
  \frac{1}{\lambda}\int_{0}^{b}f^{\prime\prime}\left(w\right)\left(\psi*\psi\right)\left(\lambda w\right) \, \dx w, \label{eq:star formula}
\end{equation}
where $f$ is a function that satisfies the hypotheses of Proposition
\ref{prop:abel sum} (Remark \ref{rem: variante}) with $a=0$ and
$b\in\mathbb{R}$ ($b=+\infty)$.

It is convenient to introduce some more notation at this point.
Following \cite{BKP} we write
\begin{equation}
\label{def-Z}
  Z_{\lambda}\left(w\right)
  :=
  \sum_{\rho}
    \frac{\lambda^{\rho}\Gamma\left(\rho\right)}{\Gamma\left(\rho+w+1\right)},
\end{equation}
where the summation is over the non-trivial zeros of the Riemann zeta-function.
We also write
\begin{align*}
%\label{def-MG}
  M_G(x)
  &:=
  \frac{x^{3}}{6}-2\sum_{\rho}\frac{x^{\rho+2}}{\rho\left(\rho+1\right)\left(\rho+2\right)}
  +\sum_{\rho_{1}}\sum_{\rho_{2}}\frac{x^{\rho_{1}+\rho_{2}+1}\Gamma\left(\rho_{1}\right)\Gamma\left(\rho_{2}\right)}{\Gamma\left(\rho_{1}+\rho_{2}+2\right)} \\
\nonumber
  &\ =
  \frac{x^3}6
  -
  2 x^2 Z_x(2)
  +
  \sum_{\rho} x^{\rho + 1} \Gamma(\rho) Z_x(\rho + 1)
\end{align*}
for the main term and the first two secondary terms in the Ces\`aro
average for Goldbach numbers.
For a possibly infinite interval $I$ of the real line we set
\begin{align}
\label{def-M0}
  \M_0(f; \lambda; I)
  &:=
  \lambda^2 \int_I w f(w) \, \dx w, \\
\label{def-M1}
  \M_1(f; \lambda; I)
  &:=
  \sum_{\rho} \frac{\lambda^{\rho+1}}{\rho}
    \int_I w^{\rho} f(w) \, \dx w, \\
\label{def-M2}
  \M_2(f; \lambda; I)
  &:=
  \sum_{\rho_1}
    \sum_{\rho_2}
      \frac{\lambda^{\rho_{1}+\rho_{2}} \Gamma(\rho_1) \Gamma(\rho_2)}
           {\Gamma\left(\rho_{1}+\rho_{2}\right)}
      \int_I w^{\rho_{1}+\rho_{2}-1} f(w) \, \dx w.
\end{align}
If $f$ satisfies suitable conditions, we will also take $b = +\infty$.
  
\begin{cor}
\label{convolution}(Explicit formula convolution) Let $x>4$. Then
\begin{equation}
\label{eq:conv}
  \left(\psi*\psi\right)\left(x\right)
  =
  M_G(x) + E(x),
\end{equation}
where $E(x)$ is a function that could be made explicit and $E\left(x\right)=O\left(x^{2}\right)$
as $x\rightarrow+\infty.$
\end{cor}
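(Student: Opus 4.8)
The plan is to compute $(\psi*\psi)(x)$ directly from the explicit formula for $\psi$ and integrate term by term. Recall that $\psi(t) = t - \sum_\rho t^\rho/\rho - \log(2\pi) - \tfrac12\log(1-t^{-2})$ for $t>1$, and $\psi(t)=0$ for $t<1$ (with some care near the small values of $t$, which is why the hypothesis $x>4$ appears). The Laplace convolution in \eqref{def-conv} is
\[
  (\psi*\psi)(x) = \int_0^x \psi(x-s)\psi(s)\,\dx s.
\]
Substituting the explicit formula for one (or both) factors, the dominant contribution comes from the product of the two ``$t$'' main terms, which gives $\int_0^x (x-s)s\,\dx s = x^3/6$; the cross terms between ``$t$'' and ``$-\sum_\rho t^\rho/\rho$'' produce $-2\sum_\rho x^{\rho+2}/(\rho(\rho+1)(\rho+2))$ after evaluating $\int_0^x (x-s)s^\rho\,\dx s$ via the Beta integral; and the term quadratic in the zero-sums produces $\sum_{\rho_1}\sum_{\rho_2} x^{\rho_1+\rho_2+1}\Gamma(\rho_1)\Gamma(\rho_2)/\Gamma(\rho_1+\rho_2+2)$, again by a Beta-function evaluation of $\int_0^x (x-s)^{\rho_1}s^{\rho_2}\,\dx s$. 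These three pieces are exactly $M_G(x)$ as displayed, using the identity $B(\alpha,\beta)=\Gamma(\alpha)\Gamma(\beta)/\Gamma(\alpha+\beta)$ to rewrite things in terms of $Z_x$.

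Next I would collect everything that is left over into $E(x)$ and bound it. The leftover terms are: (i) all convolutions involving the constant $-\log(2\pi)$ and the logarithmic piece $-\tfrac12\log(1-t^{-2})$ against each of $t$, $-\sum_\rho t^\rho/\rho$, and each other; and (ii) the boundary/truncation effects coming from the fact that $\psi(t)$ is not literally given by the explicit formula for small $t$, so the integrand must be handled separately on, say, $[0,2]\cup[x-2,x]$. For (i): the convolution of the constant against $t$ is $O(x^2)$; the convolution of the constant against $\sum_\rho t^\rho/\rho$ is $O(x^2)$ as well since $\sum_\rho x^{\rho+1}/(\rho(\rho+1)) = O(x^2)$ (bounding $|x^\rho|=x^{1/2}$ under RH, or $\le x$ unconditionally, and using convergence of $\sum 1/|\rho|^2$); the convolution of the log-piece against $t$ or against the constant is $O(x)$ or smaller because $\int_0^x s|\log(1-s^{-2})|\,\dx s$ converges near $s=1$ in an integrable way and contributes $O(x)$ overall; and the zero-sum-against-zero-sum cross terms not already extracted, plus log-against-zero-sum, are all $O(x^2)$ or better. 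So every leftover term is $O(x^2)$, giving $E(x)=O(x^2)$.

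The main obstacle is bookkeeping rather than any single hard estimate: one must justify interchanging the (conditionally convergent) sums over zeros with the $s$-integral, and one must deal cleanly with the behaviour of $\psi(t)$ near $t=1$ and near $t=0$, where the ``explicit formula'' representation degenerates — this is the real reason for restricting to $x>4$ and for the honest statement that $E$ ``could be made explicit.'' I would handle the interchange by working with the truncated explicit formula $\psi(t) = t - \sum_{|\gamma|\le T} t^\rho/\rho + O(t T^{-1}\log^2(tT) + \log t)$, performing the convolution for fixed $T$, and then letting $T\to\infty$; the error terms integrate to something $O(x^2)$ uniformly, and the main terms converge to the stated series because $\sum_\rho 1/|\rho|^2$ and the relevant Beta-weighted sums converge absolutely. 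The small-$t$ region contributes only $O(x)$ since there the integrand is bounded and the region has bounded length once we also use $\psi(s)\ll s$. Assembling these bounds yields \eqref{eq:conv} with the claimed error term.
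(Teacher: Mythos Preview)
Your approach is correct in outline but takes a genuinely different route from the paper. The paper's proof is essentially two lines: the preceding lemma (with $d=2$ and $g_1=g_2=\Lambda$) identifies $(\psi*\psi)(x)$ with the Ces\`aro average $C_1(x)=\sum_{n\le x}R_G(n)(x-n)$, and then the asymptotic formula $C_1(x)=M_G(x)+O(x^2)$, together with the absolute convergence of the series over zeros, is simply quoted from \cite{BKP}. In other words, the corollary is positioned as a repackaging of a known result via the combinatorial identity, not as an analytic computation.

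Your direct substitution of the explicit formula into the convolution integral is exactly the kind of computation that \cite{BKP} carries out (in a Mellin-transform guise) to obtain the cited result, so you are effectively reproving what the paper chooses to cite. That is a legitimate and more self-contained route, and your handling of the truncation and the $O(x^2)$ leftovers is fine. The one place where your sketch is too breezy is the claim that ``the relevant Beta-weighted sums converge absolutely'': the absolute convergence of the double series $\sum_{\rho_1}\sum_{\rho_2}\Gamma(\rho_1)\Gamma(\rho_2)/\Gamma(\rho_1+\rho_2+2)$ is not a consequence of $\sum_\rho|\rho|^{-2}<\infty$ alone --- it requires a Stirling analysis of the ratio, with the delicate case being zeros $\rho_1,\rho_2$ with imaginary parts of the same sign (so there is no exponential gain). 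This is precisely the content of Proposition~\ref{prop:perelli} (Proposition~2 of \cite{BKP}), which the paper imports later for a different purpose. If you want your direct argument to stand on its own, that estimate is the missing ingredient; with it in hand, your proof goes through.
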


\begin{proof}
It is a straightforward consequence of the previous lemma and the
fact that $C_1(x)$ is the Ces\`aro average of the Goldbach's numbers with $k=1$.
It is well known that, in this case, we have
\[
  C_1(x)
  =
  \sum_{n\leq x}R_{G}\left(n\right)\left(x-n\right)
  =
  M_G(x)
  +
  E(x),
\]
where $E(x)=O\left(x^{2}\right)$ as $x\rightarrow+\infty$, and the
series that runs over the non trivial zeros of the Riemann zeta-function
are absolutely convergent; see \cite{BKP} for a complete proof of
these claims in the general case with Ces\`aro weight of order $k>0$.
\end{proof}

For complex $\alpha$ and an interval $I$ as above we write
\begin{equation}
\label{def-I}
  \I(\alpha; I) := \int_I w^\alpha f''(w) \, \dx w.
\end{equation}
From the previous results we can establish the following general formula:
\begin{thm}
\label{thm:main}
Let $\lambda>4$ and let $f:\mathbb{R}_{0}^{+}\rightarrow\mathbb{C}$
be a function that satisfies the hypotheses of Proposition \ref{prop:abel sum}
with $a=0$. Then, we have 
\begin{equation}
\label{eq:main}
  \Sigma(f; \lambda; [0,b])
  =
  \M_0(f; \lambda; [0,b]) - 2 \M_1(f; \lambda; [0,b]) + \M_2(f; \lambda; [0,b]) \\
  +
  O\left(\lambda\int_{0}^{b}w^{2}\left|f^{\prime\prime}\left(w\right)\right| \, \dx w\right).
\end{equation}
Furthermore, we can write an explicit version of the previous formula
\begin{equation}
\label{eq:main-3}
  \Sigma(f; \lambda; [0,b])
  =
  \M +
  \frac{1}{\lambda}\int_{4/\lambda}^{b}E(\lambda w)f^{\prime\prime}\left(w\right) \, \dx w 
\end{equation}
where $I = [4/\lambda, b]$, $E(x)$ is the error term in
(\ref{eq:conv}) and
\[
  \M
  :=
  \frac{\lambda^{2}}{6} \I(3; I)
  -
  2\sum_{\rho}\frac{\lambda^{\rho+1}}{\rho(\rho+1)\left(\rho+2\right)}
  \I(\rho + 2; I)
  +
  \sum_{\rho_{1}}\sum_{\rho_{2}}\frac{\lambda^{\rho_{1}+\rho_{2}}\Gamma\left(\rho_{1}\right)\Gamma\left(\rho_{2}\right)}{\Gamma\left(\rho_{1}+\rho_{2}+2\right)}
  \I(\rho_{1}+\rho_{2}+1; I).
\]
\end{thm}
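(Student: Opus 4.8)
The plan is to start from the identity \eqref{eq:star formula}, which already expresses $\Sigma(f;\lambda;[0,b])$ as $\frac{1}{\lambda}\int_0^b f''(w)(\psi*\psi)(\lambda w)\,\dx w$, and then feed in the explicit formula for $(\psi*\psi)$ from Corollary~\ref{convolution}. The first observation is that $\psi(x)=0$ for $x<2$, so $(\psi*\psi)(x)=\int_0^x\psi(x-s)\psi(s)\,\dx s$ vanishes identically for $x<4$; this lets us restrict the integration to $w\in[4/\lambda,b]$, i.e.\ to the range $\lambda w>4$ where \eqref{eq:conv} applies. On that range I would substitute $(\psi*\psi)(\lambda w)=M_G(\lambda w)+E(\lambda w)$ and split the integral into a ``main'' piece $\frac{1}{\lambda}\int_{4/\lambda}^b f''(w)M_G(\lambda w)\,\dx w$ and an ``error'' piece $\frac{1}{\lambda}\int_{4/\lambda}^b f''(w)E(\lambda w)\,\dx w$. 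The latter is immediately $O\bigl(\lambda\int_0^b w^2|f''(w)|\,\dx w\bigr)$ by $E(x)\ll x^2$, and it is precisely the term displayed in \eqref{eq:main-3}.

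For the main piece I would insert the series expansion of $M_G$, namely $M_G(\lambda w)=\frac{(\lambda w)^3}{6}-2\sum_\rho\frac{(\lambda w)^{\rho+2}}{\rho(\rho+1)(\rho+2)}+\sum_{\rho_1}\sum_{\rho_2}\frac{(\lambda w)^{\rho_1+\rho_2+1}\Gamma(\rho_1)\Gamma(\rho_2)}{\Gamma(\rho_1+\rho_2+2)}$, and interchange the sums over zeros with the integral. This is legitimate because on $[4/\lambda,b]$ the quantity $\lambda w$ stays in the bounded interval $[4,\lambda b]$, so every power $(\lambda w)^{\alpha}$ with $\mathrm{Re}(\alpha)$ bounded is uniformly bounded there, while the remaining numerical series $\sum_\rho|\rho(\rho+1)(\rho+2)|^{-1}$ and $\sum_{\rho_1,\rho_2}|\Gamma(\rho_1)\Gamma(\rho_2)|\,|\Gamma(\rho_1+\rho_2+2)|^{-1}$ converge absolutely (this is part of the content of Corollary~\ref{convolution}, inherited from \cite{BKP}). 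Pulling the powers of $\lambda$ outside and recognising $\int_{4/\lambda}^b w^{\alpha}f''(w)\,\dx w$ as $\I(\alpha;I)$ with $I=[4/\lambda,b]$ then reproduces exactly the quantity $\M$, which gives \eqref{eq:main-3}.

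To pass from \eqref{eq:main-3} to \eqref{eq:main} I would first check that the same expression, but with each integral taken over all of $[0,b]$, equals $\M_0(f;\lambda;[0,b])-2\M_1(f;\lambda;[0,b])+\M_2(f;\lambda;[0,b])$. This is a double integration by parts in each $\I(\alpha;[0,b])$: the boundary terms at $b$ vanish since $f(b^-)=f'(b^-)=0$, the boundary terms at $0$ vanish since $f(0^+)$ and $f'(0^+)$ are finite while the exponents that occur have positive real part (here one uses $0<\mathrm{Re}(\rho)<1$), and the identities $\frac{(\rho+2)(\rho+1)}{\rho(\rho+1)(\rho+2)}=\frac{1}{\rho}$ and $\frac{(\rho_1+\rho_2+1)(\rho_1+\rho_2)}{\Gamma(\rho_1+\rho_2+2)}=\frac{1}{\Gamma(\rho_1+\rho_2)}$ turn the $\I$'s into the integrals in \eqref{def-M0}--\eqref{def-M2}. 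Finally, the difference between $\M$ (integrals over $[4/\lambda,b]$) and $\M_0-2\M_1+\M_2$ (integrals over $[0,b]$) equals $-\frac{1}{\lambda}\int_0^{4/\lambda}f''(w)M_G(\lambda w)\,\dx w$, and this is $O\bigl(\lambda\int_0^b w^2|f''(w)|\,\dx w\bigr)$: on $[0,4/\lambda]$ we have $\lambda w\le 4$ and $(\psi*\psi)(\lambda w)=0$, so $M_G(\lambda w)=-E(\lambda w)\ll(\lambda w)^2=\lambda^2w^2$, and the stated bound follows at once. Adding the three contributions gives \eqref{eq:main}.

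The step I expect to be the genuine obstacle is the justification of the term-by-term integration against $f''$ and, especially, the bookkeeping for the double sum over pairs of zeros: one must be sure that the rearranged series still converge absolutely after the integrations by parts and that every boundary term really disappears, which is exactly where the four hypotheses on $f$ at the two endpoints get used up. Everything else is routine, though the estimate $M_G(x)\ll x^2$ on the short interval $[0,4]$ (needed to absorb the $[0,4/\lambda]$ discrepancy) deserves an explicit word, since it rests on the vanishing of $\psi*\psi$ there rather than on any cancellation among the sums over zeros.
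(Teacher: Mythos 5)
Your proposal is correct and follows essentially the same route as the paper's proof: substitute the explicit formula \eqref{eq:conv} into \eqref{eq:star formula} on $[4/\lambda,b]$ (where $\psi*\psi$ no longer vanishes identically) to obtain \eqref{eq:main-3}, then extend the integrals to $[0,b]$ at the cost of an admissible error and integrate by parts twice to turn the $\I(\cdot\,;[0,b])$ terms into $\M_0-2\M_1+\M_2$. The only difference is cosmetic: for the discrepancy on $[0,4/\lambda]$ you argue via $M_G(\lambda w)=-E(\lambda w)\ll(\lambda w)^2$, which strictly requires a direct bound on $M_G$ on $[0,4]$ since $E(x)=O(x^2)$ is only asserted as $x\to+\infty$, whereas the paper bounds each term of $M_G$ separately on that interval using $\lambda w\le 4$ --- either way one arrives at the same $O\bigl(\lambda\int_0^{4/\lambda}w^2|f''(w)|\,\dx w\bigr)$.
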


\begin{proof}
Formula \eqref{eq:main-3} is a trivial application of formulas (\ref{eq:convpulita}),
(\ref{eq:conv}) and the absolute convergence of the series that runs
of the non-trivial zeros of the Riemann zeta-function.
We just remark that $\psi * \psi(x) = 0$ for $x \le 4$.
We focus
on the first formula \eqref{eq:main}. A direct application of Corollary \ref{convolution}
with $E(x)=O\left(x^{2}\right)$ and (\ref{eq:star formula}) gives
the formula
\[
  \Sigma(f; \lambda; [0,b])
  =
  \M + 
  O\left(\lambda\int_{4/\lambda}^{b}w^{2}\left|f^{\prime\prime}\left(w\right)\right| \, \dx w\right).
\]
Again note that, since all the series converge absolutely, it is
not difficult to prove that we can exchange the symbols.
We set $I_1 = [0, b]$ and $I_2 = [0, 4 / \lambda]$.
Now, clearly
\[
  \frac{\lambda^{2}}{6} \I(3; I)
  =
  \frac{\lambda^{2}}{6} \I(3; I_1)
  -
  \frac{\lambda^{2}}{6} \I(3; I_2)
  =
  \frac{\lambda^{2}}{6} \I(3; I_1)
  +
  O\left(\lambda\int_{0}^{4/\lambda}w^{2}\left|f^{\prime\prime}\left(w\right)\right| \, \dx w\right).
\]
Finally, it remains to observe that
\[
  \left|\sum_{\rho}\frac{\lambda^{\rho+1}}{\rho\left(\rho+1\right)\left(\rho+2\right)} \I(\rho+2; I_2) \right|
  \ll\lambda^{2}\int_{0}^{4/\lambda}w^{3}\left|f^{\prime\prime}\left(w\right)\right| \, \dx w 
  \ll\lambda\int_{0}^{4/\lambda}w^{2}\left|f^{\prime\prime}\left(w\right)\right| \, \dx w
\]
and
\begin{align*}
  &\left|\sum_{\rho_{1}}\sum_{\rho_{2}}\frac{\lambda^{\rho_{1}+\rho_{2}}\Gamma\left(\rho_{1}\right)\Gamma\left(\rho_{2}\right)}{\Gamma\left(\rho_{1}+\rho_{2}+2\right)}
  \I(\rho_{1}+\rho_{2}+1; I_2) \right|
  \\
 &\qquad
 \ll
  \lambda^{2}\int_{0}^{4/\lambda}w^{3}\left|f^{\prime\prime}\left(w\right)\right| \, \dx w 
  \ll
  \lambda\int_{0}^{4/\lambda}w^{2}\left|f^{\prime\prime}\left(w\right)\right| \, \dx w
\end{align*}
and so the claim follows using integration by parts twice for the main
terms.
\end{proof}

\begin{rem}
%\footnote{Qual \`e il nostro contributo?}
It is important to note that the number of terms in the asymptotic
formula (\ref{eq:main}) depends only on the number of terms in the
explicit formula of $\left(\psi*\psi\right)\left(x\right)$, that
is, in the explicit formula for $C_1(x)$.
As we have already observed, we are able to write complete explicit formula
for the Ces\`aro average of Goldbach numbers for all real orders $k>0$;
this implies that we can write (\ref{eq:main}) with more terms if
we needed, and this can be useful in some situations where error estimation
is very delicate. For example, using the last formula for $G_k (N)$ in \cite[section 1]{BKP} for $x>4$ we can write
\[
  (\psi * \psi)(x)
  =
  M_G(x)
  -
  \frac{\zeta^{\prime}}{\zeta}\left(0\right)x^{2}\\
  +
  2 \frac{\zeta^{\prime}}{\zeta}\left(0\right)
  \sum_{\rho}\frac{x^{\rho+1}}{\rho\left(\rho+1\right)}
  +O\left(x\right),
\]
and, using the arguments in the proof of Theorem \ref{thm:main},
we get
\begin{align*}
  \Sigma(f; \lambda; [0,b])
  &=
  \M_0(f; \lambda; [0,b]) - 2 \M_1(f; \lambda; [0,b]) + \M_2(f; \lambda; [0,b]) 
   -2\frac{\zeta^{\prime}}{\zeta}\left(0\right)\lambda\int_{0}^{b}f\left(w\right) \, \dx w
   \\
  &\qquad
 +2\frac{\zeta^{\prime}}{\zeta}\left(0\right)\sum_{\rho}\lambda^{\rho}\int_{0}^{b}w^{\rho-1}f\left(w\right) \, \dx w
  +O\left(\int_{0}^{b}w\left|f^{\prime\prime}\left(w\right)\right| \, \dx w\right).\nonumber 
\end{align*}
\end{rem}

Furthermore we can write a version of (\ref{eq:conv}) with an integral
representation of the remainder. Indeed, from \cite{BKP}, we know
that
\[
  C_1(x)
  =
  \sum_{n\leq x}R_{G}\left(n\right)\left(x-n\right)
  =
  \frac{1}{\left(2\pi i\right)^{2}}\int_{\left(2\right)}\int_{\left(2\right)}\frac{\zeta^{\prime}}{\zeta}\left(w\right)\frac{\zeta^{\prime}}{\zeta}\left(s\right)\frac{\Gamma\left(w\right)\Gamma\left(s\right)}{\Gamma\left(w+s+2\right)}x^{s+w+1} \, \dx s \, \dx w
\]
(see formula $(1.9)$ of \cite{BKP}). So, moving the two integrals
one after the other to the complex line with real part equal to $-1/2$,
which is possible by \cite{BKP}, we get from the residue
theorem that
\begin{align*}
%\label{eq:convcomplete}
  \left(\psi*\psi\right)\left(x\right)
  &=\sum_{n\leq x}R_{G}\left(n\right)\left(x-n\right)
  =
  M_G(x) - 2\frac{\zeta^{\prime}}{\zeta}\left(0\right)x^{2} \\
  &\qquad 
  +2\frac{\zeta^{\prime}}{\zeta}\left(0\right)\sum_{\rho}\frac{x^{\rho+1}}{\rho\left(\rho+1\right)}+\frac{\zeta^{\prime}}{\zeta}\left(0\right)^{2}\frac{x}{2}
  +E_1\left(x\right),\nonumber 
\end{align*}
where
\begin{align*}
E_1\left(x\right) & =\frac{1}{2\pi i}\int_{\left(-\frac{1}{2}\right)}\frac{\zeta^{\prime}}{\zeta}\left(s\right)\frac{x^{s+2}}{\left(s+2\right)\left(s+1\right)s} \, \dx s-\frac{1}{2\pi i}\sum_{\rho}\Gamma\left(\rho\right)
\int_{\left(-\frac{1}{2}\right)}\frac{\zeta^{\prime}}{\zeta}\left(s\right)\frac{\Gamma\left(s\right)}{\Gamma\left(\rho+s+2\right)}x^{s+\rho+1} \, \dx s
%\label{eq:integrerror}
 \\
  &\qquad
  -\frac{\zeta^{\prime}}{\zeta}\left(0\right)\frac{1}{2\pi i}\int_{\left(-\frac{1}{2}\right)}\frac{\zeta^{\prime}}{\zeta}\left(s\right)\frac{x^{s+1}}{\left(s+1\right)s} \, \dx s \\
&\qquad
+\frac{1}{\left(2\pi i\right)^{2}}\int_{\left(-\frac{1}{2}\right)}\int_{\left(-\frac{1}{2}\right)}\frac{\zeta^{\prime}}{\zeta}\left(w\right)\frac{\zeta^{\prime}}{\zeta}\left(s\right)\frac{\Gamma\left(w\right)\Gamma\left(s\right)}{\Gamma\left(w+s+2\right)}x^{s+w+1} \, \dx s \, \dx w.\nonumber 
\end{align*}

\begin{rem}
If we assume that $f\left(w\right)$ does not have compact support but it
decays at infinity sufficiently fast (for example there exists an
$\alpha>0$ such that $f^{\left(j\right)}\left(w\right)\ll e^{-\alpha w},\,j=0,1,2$)
then it is not difficult to see that Theorem \ref{thm:main} continues
to hold in the form
\begin{align}
\sum_{n\geq1}\sum_{m\geq1}\Lambda\left(n\right)\Lambda\left(m\right)f\left(\frac{n+m}{\lambda}\right) & =\lambda^{2}\int_{0}^{+\infty}wf\left(w\right) \, \dx w-2\sum_{\rho}\frac{\lambda^{\rho+1}}{\rho}\int_{0}^{+\infty}w^{\rho}f\left(w\right) \, \dx w\label{eq:main-2}\\
 &\quad +
 \sum_{\rho_{1}}\sum_{\rho_{2}}\frac{\lambda^{\rho_{1}+\rho_{2}}\Gamma\left(\rho_{1}\right)\Gamma\left(\rho_{2}\right)}{\Gamma\left(\rho_{1}+\rho_{2}\right)}\int_{0}^{+\infty}w^{\rho_{1}+\rho_{2}-1}f\left(w\right) \, \dx w\nonumber \\
 &\quad +
 O\left(\lambda\int_{0}^{+\infty}w^{2}\left|f^{\prime\prime}\left(w\right)\right| \, \dx w\right).\nonumber 
\end{align}
\end{rem}

\section{the Ces\`aro weight case for the Goldbach numbers}

In this part we show an application of the previous theorems. We
have already seen that the Ces\`aro weight of order $d$ is strictly
linked to the number of addends in an additive problem. Clearly, if
$d$ is sufficiently large, then the Ces\`aro weight of order $d-1$
is smooth, so it is an obvious choice for smoothing an arbitrary average,
not only averages for counting functions with $d$ terms. So it is
natural to study, for example, a binary problem, like the Goldbach
problem, with a general Ces\`aro weight of order $k>0$. In this part,
we show that the well-known results about the Ces\`aro average of Goldbach
numbers developed in a series of papers (see \cite{BKP,CAN3,GOLYAN,LANZAC2})
can be obtained very easily from the ``important order'' case $k=1.$
Essentially, from this particular case we are able to obtain every
case with $k>0.$

An important remark is that we indeed show the same
asymptotic formula present in \cite{BKP}, but using
our different approach. However, we will need a fundamental estimation
present in \cite{BKP} regarding a double series
that runs over the non-trivial zeros of the Riemann Zeta function.
For $k\in\mathbb{C}$ with $\Re(k) > 1$ we let
\[
  \phi_k(x)
  :=
  \begin{cases}
    \dfrac{\left(1-x\right)^{k}}{\Gamma\left(k+1\right)}
    &\text{for $0 < x < 1$,} \\
    0 &\text{otherwise.}
  \end{cases}
\]
Then, it is easy to see
that $\phi_k$ satisfies the hypotheses of Proposition \ref{prop:abel sum}.
We define the function $\E\left(\lambda,k\right)$ as follows:
\begin{equation}
\label{def-E-gotico}
  \E\left(\lambda,k\right)
  =
  \Sigma(\phi_k; \lambda; [0,1]) - \M_0(\phi_k; \lambda; [0,1])
  + 2 \M_1(\phi_k; \lambda; [0,1]) - \M_2(\phi_k, \lambda; [0,1]),
\end{equation}
where $\lambda>4$ is a fixed real number.
We can compute exactly the values of $\M_0$, $\M_1$ and $\M_2$ using
the definition of $\phi_k$ and recalling definitions \eqref{def-M0},
\eqref{def-M1} and \eqref{def-M2}, we see that
\begin{align}
\label{value-M0}
  \M_0(\phi_k; \lambda; [0,1])
  &=
  \frac{\lambda^{2}}{\Gamma\left(k+3\right)}, \\
\label{value-M1}
  \M_1(\phi_k; \lambda; [0,1])
  &=
  \sum_{\rho}\frac{\lambda^{\rho+1}\Gamma(\rho)}{\Gamma(\rho+k+2)}, \\
\label{value-M2}
  \M_2(\phi_k; \lambda; [0,1])
  &=
  \sum_{\rho_{1}}\Gamma\left(\rho_{1}\right)\lambda^{\rho_{1}}
  \sum_{\rho_{2}}\frac{\lambda^{\rho_{2}}\Gamma\left(\rho_{2}\right)}
      {\Gamma\left(\rho_{1}+\rho_{2}+k+1\right)}.
\end{align}

Let us analyse the series in \eqref{value-M1} and \eqref{value-M2}.
We already know that these series converge absolutely for real $k>0$
but we want to show that the above actually holds for $k\in\mathbb{C},\,\text{Re}(k)>0$.
By Stirling's formula
\[
\left|\Gamma\left(x+iy\right)\right|\sim\sqrt{2\pi}e^{-\pi\left|y\right|/2}\left|y\right|^{x-1/2}
\]
as $\left|y\right|\rightarrow+\infty$, which holds uniformly for $x\in\left[x_{1},x_{2}\right]$,
$x_{1},x_{2}\in\mathbb{R}$ fixed, we can conclude that $\sum_{\rho}\frac{\lambda^{\rho+1}\Gamma\left(\rho\right)}{\Gamma\left(\rho+k+2\right)}$
converges absolutely for every fixed $k\in\mathbb{C}$ with $\text{Re}(k)>0$
and uniformly for every compact region to the right of $\text{Re}(k) = 0$.

The second series is more delicate to analyse.

We need definition \eqref{def-Z}, and recall the following result which
is very important for our aims.
It is Proposition 2 from \cite{BKP}.

\begin{prop}
\label{prop:perelli}Let $\lambda\geq4$ be a natural number. Then
\textup{$Z_{\lambda}\left(w\right)$} extends to an entire function.
Moreover, there is a real number $C$ such that for any $\delta$
with $0<\delta<1$ and $\left|w+m\right|>\delta$ for all integers
$m\geq1$ we have
\[
\left|Z_{\lambda}\left(w\right)\right|\leq\frac{C}{\delta\left|\Gamma\left(w+1\right)\right|}
\left(\lambda^{\left|u\right|+1}+2^{\left|u\right|}\log\left(\left|w\right|+2\right)\right)
\]
for any $u \in \mathbb{R}$, where $u=\text{Re}(w)$. In addition, if we have $u \leq -3/2$, then
\[
\left|Z_{\lambda}\left(w\right)\right|\leq\frac{C}{\delta\left|\Gamma\left(w+1\right)\right|}
\left(\lambda^{\left|u\right|}\log\left(\lambda\right)+2^{\left|u\right|}\log\left(\left|w\right|\right)\right).
\]
\end{prop}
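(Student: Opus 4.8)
The plan is to reduce $Z_\lambda$ to an integral transform of the summatory function $\psi$ via the Beta function, and then to read off both the analytic continuation and the size estimate by shifting contours and invoking the explicit formula.

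First I would record, for $\operatorname{Re} w>1$, the Beta-integral identity $\Gamma(\rho)/\Gamma(\rho+w+1)=\bigl(\rho(\rho+1)\Gamma(w-1)\bigr)^{-1}\int_0^1 t^{\rho+1}(1-t)^{w-2}\,\dx t$; since $\sum_\rho|\rho(\rho+1)|^{-1}<\infty$ I may sum over the zeros inside the integral, and after the substitution $y=\lambda t$ this gives
\[
  Z_\lambda(w)=\frac{\lambda^{-w}}{\Gamma(w-1)}\int_0^\lambda(\lambda-y)^{w-2}\,\Psi_1(y)\,\dx y,
  \qquad
  \Psi_1(y):=\sum_\rho\frac{y^{\rho+1}}{\rho(\rho+1)} .
\]
The outside factor $\lambda^{-w}$ is what eventually produces the powers of $\lambda$ in the two bounds. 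Because $\psi$ vanishes below $2$ one splits the $y$-integral at $y=2$; on $[0,2]$ one integrates by parts twice (using that $\Psi_1\in C^1$ with $\Psi_1(2),\Psi_1'(2)$ bounded), which is what a ``short'' piece of this kind needs in order not to spoil the dependence on $|w|$.

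On $[2,\lambda]$ I would insert the explicit formula — valid there since $y\ge2>1$ — in its twice-integrated form $\Psi_1(y)=\tfrac12y^2-\psi_1(y)-y\tfrac{\zeta'}{\zeta}(0)+(\text{elementary terms})$, where $\psi_1(y):=\int_0^y\psi=\sum_{n\le y}\Lambda(n)(y-n)$, and then represent $\psi_1$ by Perron's formula. The inner $y$-integral of $(\lambda-y)^{w-2}$ against a power $y^{s+1}$ is again a Beta function, so after interchanging one is led to a representation of the shape
\[
  Z_\lambda(w)=\frac{\lambda}{\Gamma(w+2)}+(\text{further elementary terms})
  +\frac{1}{2\pi i}\int_{(c)}\Bigl(-\frac{\zeta'}{\zeta}(s)\Bigr)\frac{\lambda^{s}\,\Gamma(s+2)}{s(s+1)\,\Gamma(w+s+1)}\,\dx s,\qquad c>1,
\]
in which the awkward factor $\lambda^{-w}/\Gamma(w-1)$ has cancelled. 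The ``elementary terms'' are ratios of Gamma functions and incomplete Beta functions that are meromorphic in $w$ with at most simple poles at $w\in\{1,0,-1,-2,\dots\}$, i.e. precisely at the zeros of $1/\Gamma(w-1)$; together with the contour integral, which converges locally uniformly, this already yields that $Z_\lambda$ extends to an entire function.

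To extract the bound I would push the $s$-contour to the left. The residue of $\zeta'/\zeta$ at $s=1$ contributes $\asymp\lambda/\Gamma(w+2)$, the residue of $1/(s(s+1))$ at $s=0$ contributes $\asymp 1/\Gamma(w+1)$, and the (double) poles at the trivial zeros $s=-2,-4,\dots$ contribute a series whose terms carry denominators $\Gamma(w+1-2k)$ and whose total one controls by a series of type $\sum_{k\ge1}(|u|/\lambda)^{2k}/(2k)!\ll\cosh(|u|/4)$; this is where the hypothesis $\lambda\ge 4$ (and, at one boundary term, the identity $\bigl|(1-2/\lambda)^{w}\bigr|=(1-2/\lambda)^{u}\le 2^{|u|}$, with equality only at $\lambda=4$) produces the base $2$, and where the integrality of $\lambda$ is used to make the continuation near $y=\lambda$ completely elementary ($\psi$ is locally constant just below an integer). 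The residual vertical integral is estimated with $|\zeta'/\zeta(s)|\ll\log(|\operatorname{Im}s|+2)$ on a line far enough to the left, the decay $|\Gamma(s+2)|\ll e^{-\pi|\operatorname{Im}s|/2}$, and Stirling for $1/\Gamma(w+s+1)$, which is what yields the factor $\log(|w|+2)/|\Gamma(w+1)|$; the $1/\delta$ appears because $w$ must be kept at distance $>\delta$ from every negative integer so that the contour can be routed past the finitely many exceptional configurations and the Stirling and residue denominators do not degenerate. Assembling the pieces gives the first inequality; for $\operatorname{Re} w\le-3/2$ one performs one further integration by parts before estimating, which in that range trades a whole power of $\lambda$ for a $\log\lambda$ in the polar part, so that $\lambda^{|u|+1}$ improves to $\lambda^{|u|}\log\lambda$.

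The step I expect to be the real obstacle is not the analytic continuation — essentially formal once the Perron/Beta representation is in hand — but making the final estimate uniform in $\lambda$, in $\delta$, and in $u=\operatorname{Re}w$ simultaneously: one must position the moved contour so that the exponentially large factor $e^{\pi|\operatorname{Im}w|/2}$ hidden in $1/\Gamma(w+1)$ is matched exactly, rather than over- or under-counted, by the decay of $\Gamma(s+2)/\Gamma(w+s+1)$ along that contour, and one must verify that the $\delta$-neighbourhoods of $\{-1,-2,-3,\dots\}$ are genuinely the only places the stated shape of the bound can fail. The $\operatorname{Re}w\le-3/2$ refinement is delicate for the same reason and has to be carried out by hand.
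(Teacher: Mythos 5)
You should know at the outset that the paper does not prove this statement: it is quoted verbatim as Proposition~2 of Br\"udern--Kaczorowski--Perelli \cite{BKP}, so there is no internal proof to compare yours against. Your strategy --- the Beta-integral identity converting $Z_\lambda(w)$ into $\lambda^{-w}\Gamma(w-1)^{-1}\int_0^\lambda(\lambda-y)^{w-2}\Psi_1(y)\,\dx y$, inserting the explicit formula for $\psi_1$, and using the integrality of $\lambda$ to control the behaviour near $y=\lambda$ --- is the right family of ideas and is close in spirit to the published argument (which works one integration level down, with $\sum_\rho y^{\rho}/\rho$ against the kernel $(\lambda-y)^{w-1}/\Gamma(w)$; the term $\lambda^{|u|+1}$ does come out of $\sum_{n<\lambda}\Lambda(n)(1-n/\lambda)^{w}$ essentially as you indicate, since $\lambda-n\ge1$ for integers $n<\lambda$).

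As written, though, the sketch has concrete gaps. First, on $[0,2]$ you integrate by parts twice ``using that $\Psi_1\in C^1$'', but $\Psi_1'(y)=\sum_\rho y^{\rho}/\rho$ equals $y-\psi(y)-\log(2\pi)-\tfrac12\log(1-y^{-2})$ for $y>1$, which has a logarithmic singularity as $y\to1^{+}$ and a jump at $y=2$, while $\Psi_1''$ contains Dirac masses and the non-integrable derivative of the $\log(1-y^{-2})$ term; so the second integration by parts fails, and the unwanted factor $|w(w-1)|$ coming from $1/\Gamma(w-1)=w(w-1)/\Gamma(w+1)$ is not removed by this route. Second, your control of the trivial-zero residues by $\sum_{k\ge1}(|u|/\lambda)^{2k}/(2k)!$ does not reflect what those residues actually produce: $1/\Gamma(w+1-2k)$ equals $\Gamma(w+1)^{-1}$ times a product of $2k$ factors each as large as $|w|+2k$, and $\lambda^{-2k}(|w|+2k)^{2k}$ does not tend to zero in $k$, so the contour cannot be pushed to $-\infty$ and the residues summed with these bounds (indeed, once the explicit formula for $\psi_1$ is inserted the trivial zeros are already packaged into an elementary function and no such shift is needed). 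Third, on any vertical line the integrand of $\int_{(c)}\frac{\zeta'}{\zeta}(s)\lambda^{s}\Gamma(s)/\Gamma(s+w+1)\,\dx s$ has size $|\mathrm{Im}(s)|^{-\mathrm{Re}(w)-1}$, so the integral converges only for $\mathrm{Re}(w)>0$; the assertion that it ``converges locally uniformly'' and hence yields the entire continuation is unjustified as stated. These are precisely the points where the proof in \cite{BKP} spends its effort, and the proposal does not yet close them.
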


In our case we are considering
\[
\sum_{\rho_{1}}\Gamma\left(\rho_{1}\right)\lambda^{\rho_{1}}\sum_{\rho_{2}}\frac{\lambda^{\rho_{2}}\Gamma\left(\rho_{2}\right)}{\Gamma\left(\rho_{1}+\rho_{2}+k+1\right)}=\sum_{\rho}\Gamma\left(\rho\right)\lambda^{\rho}Z_{\lambda}\left(\rho+k\right),
\]
hence
\[
\sum_{\rho_{1}}
\left|\Gamma\left(\rho_{1}\right)\lambda^{\rho_{1}}
\sum_{\rho_{2}}\frac{\lambda^{\rho_{2}}\Gamma\left(\rho_{2}\right)}{\Gamma\left(\rho_{1}+\rho_{2}+k+1\right)}\right|
\leq\frac{C}{\delta}
\sum_{\rho}\frac{\left|\Gamma\left(\rho\right)\right|(\lambda^{\left|\beta
+\text{Re}(k)\right|+1}+2^{\left|\beta+\text{Re}(k)\right|}\log\left(\left|\rho+k\right|+2\right))}{\left|\Gamma\left(\rho+k+1\right)\right|}
\]
where $\beta=\text{Re}(\rho)$ and the last sum converges absolutely for $\text{Re}(k)>0$, by Stirling's approximation.

So we can conclude every term in equation \eqref{def-E-gotico}
is defined and convergent for $\text{Re}(k)>0$
and uniformly convergent in any compact region to the right of $\text{Re}(k)=0$
and so $\E\left(\lambda,k\right)$ is an analytic function in $k$
and \eqref{value-M1} and \eqref{value-M2} give the analytic
continuation of $\E\left(\lambda,k\right)$ to $\text{Re}(k)>0$. 

Now, in the case of $k\in\mathbb{R},\,k>1$, we have, since we can
apply the estimation in Theorem \ref{thm:main}, that
\begin{equation}
  \E\left(\lambda,k\right)
  =
  O\left(\frac{\lambda}{\Gamma\left(k-1\right)}\int_{0}^{1}w^{2}\left(1-w\right)^{k-2} \, \dx w\right)=O\left(\frac{\lambda}{\Gamma\left(k+2\right)}\right)\label{eq:stima E}
\end{equation}
and clearly the right part of (\ref{eq:stima E}) is defined and convergent
for $\text{Re}(k)>0$ and the implicit constant does not depend on
$k$. Hence, recalling \eqref{def-RG}, we have proved the following theorem:
\begin{thm}
Let $k\in\mathbb{R}$ with $k>0$ and $\lambda>4$ be a natural number. Then
\begin{align*}
  \frac{1}{\Gamma\left(k+1\right)}
  \sum_{n<\lambda} R_G\left(n\right)\left(1-\frac{n}{\lambda}\right)^{k}
  =&
  \M_0(\phi_k; \lambda; [0,1]) -2 \M_1(\phi_k; \lambda; [0,1])
\\  
  &+
  \M_2(\phi_k; \lambda; [0,1])
  +O\left(\frac{\lambda}{\Gamma\left(k+2\right)}\right).
\end{align*}
\end{thm}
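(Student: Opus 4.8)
The plan is to read the formula off from Theorem~\ref{thm:main} applied with the weight $f=\phi_k$, treating first the range $k>1$, where $\phi_k$ genuinely satisfies all the hypotheses of Proposition~\ref{prop:abel sum}, and then passing to $0<k\le1$ by analytic continuation in $k$. In the range $k>1$ one has $\phi_k\in C^1(0,1)$ with $\phi_k'$ absolutely continuous on $(0,1)$, the one-sided values $\phi_k(0^+)=1/\Gamma(k+1)$ and $\phi_k'(0^+)=-k/\Gamma(k+1)$ finite, and $\phi_k(1^-)=\phi_k'(1^-)=0$, so Theorem~\ref{thm:main} applies with $a=0$, $b=1$. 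First I would observe that, directly from the definition of $\phi_k$ and of $R_G$ in \eqref{def-RG}, $\Sigma(\phi_k;\lambda;[0,1])=\frac1{\Gamma(k+1)}\sum_{n<\lambda}R_G(n)(1-n/\lambda)^k$ (the term $n=\lambda$ contributes $0$), so the left-hand sides match. The three main terms are then evaluated by Euler's Beta integral and reduce to \eqref{value-M0}--\eqref{value-M2}. For the error term of \eqref{eq:main} one uses $\phi_k''(w)=\frac{k(k-1)}{\Gamma(k+1)}(1-w)^{k-2}=\frac1{\Gamma(k-1)}(1-w)^{k-2}$, so that $\lambda\int_0^1 w^2\,|\phi_k''(w)|\,\dx w=\frac{\lambda}{\Gamma(k-1)}B(3,k-1)=\frac{2\lambda}{\Gamma(k+2)}$; hence the error is $O(\lambda/\Gamma(k+2))$ with an absolute implied constant, and the theorem is proved for real $k>1$.

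\textbf{Continuation to $0<k\le1$.} For $0<k\le1$ the weight $\phi_k$ no longer satisfies the hypotheses of Proposition~\ref{prop:abel sum} (its second derivative is not integrable near $w=1$), so I would argue by continuation in $k$ instead. Each side of the asserted identity is analytic on $\{\Re(k)>0\}$: the left-hand side is a \emph{finite} sum over $n<\lambda$ of the entire functions $k\mapsto R_G(n)(1-n/\lambda)^k/\Gamma(k+1)$; $\M_0(\phi_k;\lambda;[0,1])=\lambda^2/\Gamma(k+3)$ is entire; and, by the Stirling bound for $|\Gamma(x+iy)|$ together with Proposition~\ref{prop:perelli} --- exactly the estimates carried out just before \eqref{eq:stima E} --- the series defining $\M_1(\phi_k;\lambda;[0,1])$ and $\M_2(\phi_k;\lambda;[0,1])$ converge absolutely and locally uniformly on $\{\Re(k)>0\}$, hence are analytic there. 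Therefore $\E(\lambda,k)$, defined by \eqref{def-E-gotico}, is analytic on $\{\Re(k)>0\}$, and the identity $\frac1{\Gamma(k+1)}\sum_{n<\lambda}R_G(n)(1-n/\lambda)^k=\M_0-2\M_1+\M_2+\E(\lambda,k)$ (with the arguments $(\phi_k;\lambda;[0,1])$ understood), which is simply the definition of $\E(\lambda,k)$, continues to all of $\{\Re(k)>0\}$.

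\textbf{The main obstacle.} What does \emph{not} continue for free is the bound $\E(\lambda,k)=O(\lambda/\Gamma(k+2))$: analytic continuation transports the identity, not the inequality, from $k>1$ down to $0<k\le1$. I would close this gap by a Phragm\'en--Lindel\"of argument for $H(k):=\Gamma(k+2)\,\E(\lambda,k)/\lambda$, which is analytic on $\{\Re(k)>0\}$. On the line $\Re(k)=\tfrac32$ one repeats the Beta-integral computation above for complex $k$ to get $|\E(\lambda,k)|\ll\lambda\,|k(k-1)|\,|\Gamma(k+1)|^{-1}\,\Gamma(\Re(k)-1)/\Gamma(\Re(k)+2)$, whence $|H(k)|\ll(1+|k|)^3$; on a line $\Re(k)=\delta$ with $\delta\in(0,1)$ fixed one bounds $\Sigma(\phi_k;\lambda;[0,1])\ll\lambda^2/|\Gamma(k+1)|$ (using $\sum_{n<\lambda}R_G(n)\le\psi(\lambda)^2\ll\lambda^2$) and the $\M_j$-series via Proposition~\ref{prop:perelli}, again obtaining a polynomial bound for $|H(k)|$; and throughout the strip $\delta\le\Re(k)\le\tfrac32$ the function $H$ is of minimal exponential order in $\Im(k)$, because the exponential type of $\E(\lambda,k)$ in $k$ is at most $\pi/2$ (each summand contains exactly one reciprocal Gamma factor depending on $k$) and is exactly cancelled by the decay of $|\Gamma(k+2)|$. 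The Phragm\'en--Lindel\"of principle (after dividing by a power of $k+c$ to absorb the polynomial boundary growth) then gives $|H(k)|\ll(1+|k|)^{O(1)}$ in the whole strip, and specialising to real $k\in(0,1]$ yields $\E(\lambda,k)\ll\lambda/\Gamma(k+2)$, completing the proof. An alternative that avoids Phragm\'en--Lindel\"of is to use $C_k(x)=\frac{\dx}{\dx x}C_{k+1}(x)$ and differentiate the already-established order-$(k+1)$ asymptotic, at the price of justifying term-by-term differentiation of the sums over the zeros $\rho$.
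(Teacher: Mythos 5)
Your proposal follows the same skeleton as the paper's proof: identify $\Sigma(\phi_k;\lambda;[0,1])$ with the Ces\`aro sum, evaluate \eqref{value-M0}--\eqref{value-M2} by Beta integrals, apply Theorem~\ref{thm:main} for real $k>1$ to get $\E(\lambda,k)=O(\lambda/\Gamma(k+2))$ as in \eqref{eq:stima E}, and then use the analyticity of every term in \eqref{def-E-gotico} on $\{\Re(k)>0\}$ (via Stirling and Proposition~\ref{prop:perelli}) to continue. The one genuine difference is in the final step: the paper passes from $k>1$ to $0<k\le1$ by remarking only that the right-hand side of \eqref{eq:stima E} ``is defined and convergent for $\Re(k)>0$ and the implicit constant does not depend on $k$,'' which --- as you correctly point out --- transports the identity but not the inequality. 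Your Phragm\'en--Lindel\"of argument for $H(k)=\Gamma(k+2)\E(\lambda,k)/\lambda$ (polynomial bounds on $\Re(k)=3/2$ and $\Re(k)=\delta$, with the exponential type of $\E$ cancelled by the decay of $|\Gamma(k+2)|$, exactly as in the convergence analysis preceding \eqref{eq:stima E}) supplies the justification the paper omits, at the cost of having to verify the order-of-growth hypotheses in the strip; this is in the spirit of how the analogous extension is handled in \cite{BKP}. So your proof is correct and, on this point, more complete than the one in the paper.
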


According to definition \eqref{def-RG}, the left-hand side above is
simply $\lambda^{-k} C_k(\lambda)$.

\begin{rem*}
It is interesting to note that every time we will
have to deal with a convolution of two arithmetic functions whose
explicit formulas have some series that runs over the non-trivial
zeros of $\zeta(s)$, then some double series that runs over the non-trivial
zeros, and the problem of its convergence, will arise. Hence, the
ideas developed in Proposition \ref{prop:perelli} deserves to be
investigated and generalised since they have an important role also
with our approach.
\end{rem*}

\section{power series and Dirichlet series}

In this section we show that using the previous results we can give a
simple alternative proof of the explicit formula for the power series
\[
\sum_{n\geq1}R_G\left(n\right)e^{-ny},\,y>0
\]
when $y$ is a real number.
Indeed, take $g_{1}\left(n\right)=g_{2}\left(n\right)=\Lambda\left(n\right),\,\lambda=N>0$
and $f\left(w\right)=f_{N,y}\left(w\right)=e^{-wNy},\,y>0.$ Then
we have
\[
  \sum_{n\geq1}
    \sum_{m\geq1}
      \Lambda\left(n\right)\Lambda\left(m\right)f\left(\frac{n+m}{N}\right)
  =
  \Bigl( \sum_{n\geq1}\Lambda\left(n\right)e^{-ny} \Bigr)^2
  =
  \sum_{n\geq1}R_G\left(n\right)e^{-ny}.
\]
So, applying (\ref{eq:main-2}), we get
\begin{align*}
\sum_{n\geq1}R_G\left(n\right)e^{-ny} & 
=N^{2}\int_{0}^{+\infty}we^{-wNy} \, \dx w-2\sum_{\rho}\frac{N^{\rho+1}}{\rho}\int_{0}^{+\infty}w^{\rho}e^{-wNy} \, \dx w\\
 &\quad
  +\sum_{\rho_{1}}\sum_{\rho_{2}}\frac{N^{\rho_{1}+\rho_{2}}\Gamma\left(\rho_{1}\right)\Gamma\left(\rho_{2}\right)}{\Gamma\left(\rho_{1}+\rho_{2}\right)}\int_{0}^{+\infty}w^{\rho_{1}+\rho_{2}-1}e^{-wNy} \, \dx w\\
 & \quad
 +O\left(N^{3}y^{2}\int_{0}^{+\infty}w^{2}e^{-wNy} \, \dx w\right)
\end{align*}
and simple computations lead to
\[
  \sum_{n\geq1}R_G\left(n\right)e^{-ny}
  =
  \frac{1}{y^{2}}-2\sum_{\rho}y^{-\rho-1}\Gamma\left(\rho\right)
  +
  \Bigl( \sum_{\rho}y^{-\rho}\Gamma\left(\rho\right) \Bigr)^2
  +O\left(\frac{1}{y}\right).
\]
Again, we observe that, if needed, we can add more terms to our
formula. 
% When $z$ is a complex number with positive real part, the
%standard derivation is through the explicit formula for
%$\widetilde{S}(z)$.
%\footnote{Check}

We show another well-known fact in a very easy way, that is, assuming
RH, the Dirichlet series $\Phi(s)=\sum_{n\geq1}\frac{R_G\left(n\right)}{n^{s}}$
admits a meromorphic continuation to the half plane $\text{Re}(s)>1$
with poles at $s=2$ and $s=1+\rho$; this was proved in \cite{EGMA}
and the demonstration of these facts occupied an important part of the whole article.
We will show that our formula, in just a few lines, reproves these claims very easily. Indeed,
fix $\lambda>0$ and $s\in\mathbb{C},\text{Re}(s)>2$ and consider
the function $f:\left[\frac{1}{\lambda},+\infty\right)\rightarrow\mathbb{C},\,f\left(w\right):=w^{-s}.$
Then we can conclude that
\[
\sum_{n\geq1}\frac{R_G\left(n\right)}{n^{s}}=\sum_{n\geq1}
\sum_{m\geq1}\frac{\Lambda\left(n\right)\Lambda\left(m\right)}{\left(m+n\right)^{s}}
\]
converges. Now, if we apply formula (\ref{eq:abel conseq-1}) we get
\begin{equation}
\sum_{n\geq1}\frac{R_G\left(n\right)}{n^{s}}=\frac{s\left(s+1\right)}{\lambda^{s+1}}
\int_{1/\lambda}^{+\infty}x^{-s-2}\left(\psi*\psi\right)\left(\lambda x\right) \, \dx x
=s\left(s+1\right)\int_{1}^{+\infty}u^{-s-2}\left(\psi*\psi\right)\left(u\right) \, \dx u\label{eq:dirichlet}
\end{equation}
and note that, since $\left(\psi*\psi\right)\left(x\right)\ll x^{3}$, we
have that the integral is convergent. Hence, inserting (\ref{eq:conv})
in (\ref{eq:dirichlet}) and making trivial computations we get
\begin{align*}
&\sum_{n\geq1}\sum_{m\geq1}\frac{\Lambda\left(n\right)\Lambda\left(m\right)}{\left(m+n\right)^{s}}
=\frac{s\left(s+1\right)}{6\left(s-2\right)}-2\sum_{\rho}\frac{s\left(s+1\right)}{\rho\left(\rho+1\right)\left(\rho+2\right)\left(s-\rho-1\right)}
\\
&\quad
+\sum_{\rho_{1}}\sum_{\rho_{2}}\frac{\Gamma\left(\rho_{1}\right)\Gamma\left(\rho_{2}\right)s\left(s+1\right)}{\Gamma\left(\rho_{1}+\rho_{2}+2\right)\left(s-\rho_{1}-\rho_{2}\right)}
+O\left(\left|s\right|\left|s+1\right|\int_{1}^{+\infty}u^{-\text{Re}(s)} \, \dx u\right)
\end{align*}
and clearly the integral in the $O$-term is convergent if $\text{Re}(s)>1$.
Hence, this formula, under the assumption of RH, proves a meromorphic
continuation of the function $\Phi(s)=\sum_{n\geq1}\frac{R_G\left(n\right)}{n^{s}}$
to the half plane $\text{Re}(s)>1$ with poles at $s=2$ and $s=1+\rho$.
Note that the double series $\sum_{\rho_{1}}\sum_{\rho_{2}}\frac{\Gamma\left(\rho_{1}\right)\Gamma\left(\rho_{2}\right)}{\Gamma\left(\rho_{1}+\rho_{2}+2\right)}$
is absolutely convergent and, under RH, the function $\frac{s\left(s+1\right)}{s-\rho_{1}-\rho_{2}}$
has no poles in the region $\text{Re}(s)>1$. Also observe that the
residues of $\frac{s+1}{6\left(s-2\right)}N^{s}$ at $s=2$ and $-2\sum_{\rho}\frac{\left(s+1\right)N^{s}}{\rho\left(\rho+1\right)\left(\rho+2\right)\left(s-\rho-1\right)}$
at $s=\rho+1$ return exactly the first two terms of the asymptotic
formula of $\sum_{n\leq N}R_G\left(n\right)$.

\section{weighted averages of representations of a prime and an integer power}

In this section we show that with our method we can easily prove
asymptotic formulae for weighted averages for the number of representations
of a prime and an integer power. As far as we know,
the study of this average was done only in the case of squares (see
\cite{LANZAC1,CANGAMZAC}) and, for technical reasons, it is not possible
to study the problem of an arbitrary integer power with the proposed
techniques. Now we will show that, with our method, the general problem
can be easily attacked.

Indeed, let us fix an integer $\ell\geq2$, a function $f$ that verifies
the hypothesis of Proposition \ref{prop:abel sum} with $a=0$ and
the functions
\begin{equation*}
%\label{def-Rl}
  r_{\ell}\left(n\right) := \begin{cases}
  1, & \text{if } n=k^{\ell}\text{\ for some }k\in\mathbb{N}^{+}\\
  0, & \text{otherwise,}
  \end{cases}
  \qquad\text{and}\qquad
  R_{\ell}\left(x\right):=\sum_{n\leq x}r_{\ell}\left(n\right).
\end{equation*}
Then, taking $\lambda>0,$ we have
\[
  \Sigma_{\ell}(f; \lambda; [0,b])
  :=
\sum_{n\leq\lambda b}\sum_{m\leq\lambda b-n}\Lambda\left(n\right)r_{\ell}\left(m\right)f\left(\frac{n+m}{\lambda}\right)=\frac{1}{\lambda}\int_{0}^{b}f^{\prime\prime}\left(w\right)\left(\psi*R_{\ell}\right)\left(\lambda w\right) \, \dx w.
\]
So the first step is to find the explicit formula for $\left(\psi*R_{\ell}\right)\left(\lambda w\right)$.
We introduce some notation:
\begin{align*}
  \M_0^{\ell}(x)
  &:=
  \frac{\ell^{2}}{2\ell^{2}+3\ell+1} x^{2+1/\ell}, \\
  \M_1^{\ell}(x)
  &:=
  -\frac{\Gamma\left(1/\ell\right)}{\ell}
  \sum_{\rho}
    \frac{x^{\rho+1/\ell+1}\Gamma\left(\rho\right)}
         {\Gamma\left(\rho+2+1/\ell\right)}
  =
  -\Gamma\left(1 + \frac1\ell \right)
  x^{1 + 1 / \ell} Z_x(1 + 1 / \ell), \\
  \M_2^{\ell}(x)
  &:=
  -
  \frac{x^{2}}{2}
  +
  x^{2+1/\ell}
  \sum_{n\geq1}
    \sum_{m=1}^{2}\dbinom{2}{m}\left(-1\right)^{m}
    \sum_{k=1}^{\ell m}\dbinom{\ell m}{k}
      \frac{k! \sin\left(2\pi nx^{1/\ell}+\frac{k\pi}{2}\right)}
           {\left(2\pi nx^{1/\ell}\right)^{k+1}}.
%  +
%  x
%  \frac{\sin\left(\frac{\ell \pi}{2}\right) \zeta(\ell + 1)}
%       {2^\ell \pi^{\ell + 1}}.
\end{align*}
As we show below, $\M_0^{\ell}$ and $\M_1^{\ell}$ arise respectively
from the main term and the sum over zeros in the classical explicit
formula for the function $\psi$.
$\M_2^{\ell}$ yields a secondary term stemming from the average of the
fractional part, and also an oscillatory term of lower order of magnitude.
The oscillatory terms in $\M_2^{\ell}$ are meaningful only in the
conditional case.

\begin{thm}\label{prime_integ}
We have
\[
  (\psi * R_{\ell})(x)
  =
  \M_0^{\ell}(x) + \M_1^{\ell}(x) + \M_2^{\ell}(x)
  + O \bigl( \Delta(x)  \bigr),
%  + O \bigl( x^2 \exp\bigl\{ -C \sqrt{\log(x)} \bigr\} \bigr).
\]
where $\Delta(x) = x^2 \exp\bigl\{ -C \sqrt{\log(x)} \bigr\}$
unconditionally and $\Delta(x) = x^{3/2}$ if we assume RH.
\end{thm}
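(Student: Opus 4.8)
The plan is to obtain an exact identity expressing $(\psi*R_{\ell})(x)$ through $\psi$ alone and then to insert the classical explicit formula. Since $R_{\ell}(s)=\sum_{k\ge1}\mathbf{1}_{[k^{\ell},\infty)}(s)$, exchanging the sum and the integral in \eqref{def-conv} gives at once $(\psi*R_{\ell})(x)=\sum_{1\le k\le x^{1/\ell}}\psi_{1}(x-k^{\ell})$ with $\psi_{1}(y):=\int_{0}^{y}\psi(t)\,\dx t$; for separating the three blocks of the statement it is more convenient to use instead the decomposition $R_{\ell}(s)=s^{1/\ell}-\tfrac12+\sum_{n\ge1}\frac{\sin(2\pi ns^{1/\ell})}{\pi n}$ (valid off the $\ell$-th powers), coming from $\lfloor y\rfloor=y-\{y\}$ and the Fourier series of $\{y\}$. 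Writing $\mathrm{id}^{1/\ell}(s):=s^{1/\ell}$ and $s_{n}(s):=\sin(2\pi ns^{1/\ell})$, and denoting by $1$ the constant function, this yields
\[
  (\psi*R_{\ell})(x)=(\psi*\mathrm{id}^{1/\ell})(x)-\tfrac12(\psi*1)(x)+\sum_{n\ge1}\frac{1}{\pi n}\,(\psi*s_{n})(x),
\]
the interchange of $\sum_{n}$ with the integral being legitimate by dominated convergence, since the partial sums of the Fourier series of $\{\cdot\}$ are uniformly bounded. I would then treat the three blocks separately.

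For the first block, integrate by parts to get $(\psi*\mathrm{id}^{1/\ell})(x)=\frac1\ell\int_{0}^{x}\psi_{1}(x-s)s^{1/\ell-1}\,\dx s$ (the boundary terms vanish because $\psi_{1}(0)=0$ and $\ell\ge2$), and insert the classical explicit formula $\psi_{1}(y)=\frac{y^{2}}{2}-\sum_{\rho}\frac{y^{\rho+1}}{\rho(\rho+1)}+P(y)$, whose sum over the zeros converges absolutely and with $P(y)=O(1+y)$. The three resulting integrals are Beta integrals, $\int_{0}^{x}(x-s)^{\alpha}s^{1/\ell-1}\,\dx s=x^{\alpha+1/\ell}B(\alpha+1,1/\ell)$, and after the identities $\Gamma(1/\ell)/\ell=\Gamma(1+1/\ell)$ and $\Gamma(\rho+2)/(\rho(\rho+1))=\Gamma(\rho)$ they collapse to $\M_{0}^{\ell}(x)$, to $\M_{1}^{\ell}(x)$ (in the $Z_{x}$ form), and to an error $O(x^{1+1/\ell})$; absolute convergence of every zero-sum that appears is controlled by Stirling, $\Gamma(\rho)/\Gamma(\rho+2+1/\ell)\asymp|\rho|^{-2-1/\ell}$, together with the density of the zeros. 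The second block is literally $-\tfrac12\psi_{1}(x)$, and $\psi_{1}(x)-\frac{x^{2}}{2}=-\sum_{\rho}\frac{x^{\rho+1}}{\rho(\rho+1)}+O(x)$, which is $O(x^{3/2})$ under RH (because $\Re\rho=\tfrac12$ and $\sum_{\rho}|\rho|^{-2}<\infty$) and $O\bigl(x^{2}\exp(-C\sqrt{\log x})\bigr)$ unconditionally (from $\psi_{1}(x)-x^{2}/2=\int_{0}^{x}(\psi(t)-t)\,\dx t$ and the de la Vall\'ee Poussin estimate); thus the second block supplies the polynomial part of $\M_{2}^{\ell}(x)$ up to $O(\Delta(x))$.

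For the third block write $\psi=\mathrm{id}+(\psi-\mathrm{id})$. The $\mathrm{id}$-part, after the substitution $s=u^{\ell}$ and one integration by parts, equals $\sum_{n\ge1}\int_{0}^{x^{1/\ell}}(x-u^{\ell})^{2}\cos(2\pi nu)\,\dx u$; expanding $(x-u^{\ell})^{2}=\sum_{m=0}^{2}\binom{2}{m}(-1)^{m}x^{2-m}u^{\ell m}$ and integrating each $\int_{0}^{x^{1/\ell}}u^{\ell m}\cos(2\pi nu)\,\dx u$ by parts repeatedly reproduces exactly the double series in $\M_{2}^{\ell}(x)$, the summands $k!\,\sin(2\pi nx^{1/\ell}+k\pi/2)/(2\pi nx^{1/\ell})^{k+1}$ being the boundary contributions at $u=x^{1/\ell}$, while the boundary contributions at $u=0$ and the $m=0$ term are $O(x)$ and go into the error. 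The genuinely delicate part is the remaining contribution of $\psi-\mathrm{id}$ to the third block, namely the cross term between the non-trivial zeros of $\zeta$ and the Fourier modes of the fractional part, $\sum_{n\ge1}\frac{1}{\pi n}\int_{0}^{x}\bigl(\psi(x-s)-(x-s)\bigr)\sin(2\pi ns^{1/\ell})\,\dx s$; inserting $\psi(y)-y=-\sum_{\rho}y^{\rho}/\rho+O(1)$ turns it into a double sum of oscillatory integrals $\sum_{n}\sum_{\rho}\frac1\rho\int_{0}^{x^{1/\ell}}u^{\ell-1}(x-u^{\ell})^{\rho}\sin(2\pi nu)\,\dx u$ (the $O(1)$ remainder, coming from the trivial zeros and the constant, handled in the same way). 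I would estimate each oscillatory integral by repeated integration by parts, using that $u^{\ell-1}(x-u^{\ell})^{\rho}$ and its derivative vanish at $u=0$ (as $\ell\ge2$) and at $u=x^{1/\ell}$ (as $\Re\rho>0$): this produces a factor $\ll n^{-2}$ together with a power of $x$ uniform in $\rho$, which against $\sum_{\rho}|\rho|^{-1}$ times the Stirling factor is summable and, after also feeding in the zero-free region, gives $O\bigl(x^{2}\exp(-C\sqrt{\log x})\bigr)$; under RH one uses $\Re\rho=\tfrac12$ directly and the cross term is $O(x^{3/2})$. Assembling the three blocks gives $\M_{0}^{\ell}+\M_{1}^{\ell}+\M_{2}^{\ell}+O(\Delta)$. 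The two points demanding real care are the interchanges of $\sum_{n}$, $\sum_{\rho}$ and $\int$, and a bound for $\int_{0}^{x^{1/\ell}}u^{\ell-1}(x-u^{\ell})^{\rho}\sin(2\pi nu)\,\dx u$ that is simultaneously sharp enough in $n$ and uniform enough in $\rho$ to survive the sum over the zeros.
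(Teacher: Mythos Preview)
Your overall decomposition matches the paper's: split $R_\ell(s)=s^{1/\ell}-\{s^{1/\ell}\}$, treat $\psi*\mathrm{id}^{1/\ell}$ via the explicit formula (your route through $\psi_1$ makes the zero-sum absolutely convergent from the outset, which is clean), and Fourier-expand the fractional part to produce the oscillatory piece of $\M_2^\ell$. Blocks 1, 2, and the $\mathrm{id}$-part of block 3 are essentially the paper's computation, lightly reorganised.

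The gap is exactly the step you yourself flag as delicate: the cross term. Your plan is to insert $\psi(y)-y=-\sum_\rho y^\rho/\rho+O(1)$ and then control $\sum_n\sum_\rho$ of the oscillatory integrals term by term. This does not go through. The zero-sum in the explicit formula for $\psi$ is only conditionally convergent, so the interchange with $\sum_n$ and $\int$ is unjustified to begin with. Moreover, each integration by parts in $u$ differentiates $(x-u^\ell)^\rho$ and brings down a factor of $\rho$, so the resulting bound is not ``a power of $x$ uniform in $\rho$'' as you claim; after one differentiation the factor $(x-u^\ell)^{\rho-1}$ no longer vanishes at $u=x^{1/\ell}$ when $\text{Re}(\rho)<1$ (under RH it even blows up there), and the boundary contribution at $u=0$ also fails for $\ell=2$. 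Finally, $\sum_\rho|\rho|^{-1}$ diverges, and there is no genuine Stirling-type decay in these integrals to compensate.

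The paper sidesteps the whole difficulty with a one-line observation: the Fourier series sums back to a bounded function, so the cross term is simply
\[
\int_0^x\bigl(\psi(x-s)-(x-s)\bigr)\Bigl(\tfrac12-\bigl\{s^{1/\ell}\bigr\}\Bigr)\,\dx s
= O\Bigl(\int_0^x\bigl|\psi(t)-t\bigr|\,\dx t\Bigr),
\]
and the right-hand side is $O\bigl(x^2\exp(-C\sqrt{\log x})\bigr)$ unconditionally by the de la Vall\'ee Poussin bound, and $O(x^{3/2})$ under RH via Cauchy--Schwarz and Cram\'er's estimate $\int_0^x(\psi(t)-t)^2\,\dx t\ll x^2$. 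In short: do not unpack both the Fourier series and the explicit formula simultaneously when estimating the error; resum the Fourier modes to a bounded function first.
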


\begin{proof}
Let $x>2$ be a real number. Now, since $R_{\ell}\left(x\right)=\left\lfloor x^{1/\ell}\right\rfloor =x^{1/\ell}-\left\{ x^{1/\ell}\right\} $,
we note that
\begin{align*}
\left(\psi*R_{\ell}\right)\left(x\right) & =\int_{0}^{x}\psi\left(t\right)R_{\ell}\left(x-t\right) \, \dx t=\int_{0}^{x}\psi\left(t\right)\left(x-t\right)^{1/\ell} \, \dx t-\int_{0}^{x}\psi\left(t\right)\left\{ \left(x-t\right)^{1/\ell}\right\}  \, \dx t.
\end{align*}
We can integrate the explicit formula of $\psi(x)$ term by term (see
\cite{RAMSAO}, Lemma 4), hence we have 
\begin{align*}
\int_{0}^{x}\psi\left(t\right)\left(x-t\right)^{1/\ell} \, \dx t & =\int_{2}^{x}t\left(x-t\right)^{1/\ell} \, \dx t-\sum_{\rho}\frac{1}{\rho}\int_{2}^{x}t^{\rho}\left(x-t\right)^{1/\ell} \, \dx t\\
  &\qquad
  -\int_{2}^{x}\left(\log\left(2\pi\right)+\frac{1}{2}\log\left(1-\frac{1}{t^{2}}\right)\right)\left(x-t\right)^{1/\ell} \, \dx t.
\end{align*}
Obviously, 
\begin{align*}
\notag
  \int_{2}^{x}t\left(x-t\right)^{1/\ell} \, \dx t & =x^{2+1/\ell}\frac{\ell^{2}}{2\ell^{2}+3\ell+1}+O\left(x^{1/\ell}\right)
  = \M_0^{\ell}(x) + O(x^{1/\ell}),\\
\notag
  -\sum_{\rho}\frac{1}{\rho}\int_{2}^{x}t^{\rho}\left(x-t\right)^{1/\ell} \, \dx t
   & =\left(x-2\right)^{1/\ell}\sum_{\rho}\frac{2^{\rho+1}}{\rho\left(\rho+1\right)}-\frac{1}{\ell}\sum_{\rho}\frac{1}{\rho\left(\rho+1\right)}\int_{2}^{x}t^{\rho+1}\left(x-t\right)^{1/\ell-1} \, \dx t\\
\notag
  & =-\frac{\Gamma\left(1/\ell\right)}{\ell}\sum_{\rho}\frac{x^{\rho+1/\ell+1}\Gamma\left(\rho\right)}{\Gamma\left(\rho+2+1/\ell\right)}+O_{\ell}\left(x^{1/\ell}\right)
  = \M_1^{\ell}(x) + O(x^{1/\ell}),
  \end{align*}
  \begin{align}
\label{contr-0}
  \log\left(2\pi\right)\int_{2}^{x}\left(x-t\right)^{1/\ell} \, \dx t
  & =
  \log(2 \pi) \frac{\ell}{\ell+1} x^{1+1/\ell} + O(x^{1/\ell})
  =
  O_{\ell}\left(x^{1/\ell+1}\right)\\
\notag
  -\frac{1}{2}\int_{2}^{x}\log\left(1-\frac{1}{t^{2}}\right)\left(x-t\right)^{1/  \ell} \, \dx t
  &\ll
  x^{1/\ell} \int_{2}^{x} \frac{\dx t}{t^2} 
  =
  O_{\ell}\left(x^{1/\ell}\right).
\end{align}
The term in \eqref{contr-0} is meaningful only when $\ell = 1$.
Combined with the contributions from $\M_0^1$ and $\M_1^1$, it
essentially yields a proof of the explicit formula for $\psi_1(x)$.
Now let us consider 
\begin{equation}
\label{MT-frac}
  \int_{0}^{x}\psi\left(t\right)\left\{ \left(x-t\right)^{1/\ell}\right\}  \, \dx t
  =
  \int_{0}^{x}t\left\{ \left(x-t\right)^{1/\ell}\right\}  \, \dx t
  +
  O\left(\int_{0}^{x}\left|\psi\left(t\right)-t\right| \, \dx t\right).
\end{equation}
Using the Fourier series of the fractional part
\[
\left\{ x\right\} =\frac{1}{2}-\sum_{n\geq1}\frac{\sin\left(2\pi nx\right)}{n\pi}
\]
where $x$ is not an integer (see, e.g, \cite{Tit}, formula (2.1.7))
and recalling that a Fourier series of a periodic and piecewise continuous
function can be integrated termwise (see, e.g, \cite{Fol}, Theorem
2.4), after an integration by parts and a change of variables we get
\begin{align*}
\int_{0}^{x}t\left\{ \left(x-t\right)^{1/\ell}\right\}  \, \dx t & 
=\frac{x^{2}}{2}
-\sum_{n\geq1}\frac{1}{n\pi}\int_{0}^{x}t\sin\left(2\pi n\left(x-t\right)^{1/\ell}\right) \, \dx t\\
 & =
 \frac{x^{2}}{2}-\sum_{n\geq1}\frac{1}{\ell}\int_{0}^{x}t^{2}\cos\left(2\pi n\left(x-t\right)^{1/\ell}\right)\left(x-t\right)^{1/\ell-1} \, \dx t\\
 & =\frac{x^{2}}{2}-x^{2+1/\ell}\sum_{n\geq1}\int_{0}^{1}\left(1-u^{\ell}\right)^{2}\cos\left(2\pi nx^{1/\ell}u\right) \, \dx u.
\end{align*}
Put $2\pi nx^{1/\ell}=a_{n}\left(x\right)=a$. Since $\ell$ is an
integer, we have (section 2.633, formula 2, page 215 of \cite{GraRyz})
that
\[
\sum_{n\geq1}\int_{0}^{1}\left(1-u^{\ell}\right)^{2}\cos\left(au\right) \, \dx u
=\sum_{k=0}^{2}\dbinom{2}{k}\left(-1\right)^{k}\sum_{n\geq1}\int_{0}^{1}u^{\ell k}\cos\left(au\right) \, \dx u
\]
\[
=\sum_{n\geq1}\sum_{m=0}^{2}\dbinom{2}{m}\left(-1\right)^{m}\sum_{k=0}^{\ell m}\dbinom{\ell m}{k}\frac{k!}{a^{k+1}}\left[\sin\left(a+\frac{k\pi}{2}\right)-\delta_{\ell m,k}\sin\left(\frac{k\pi}{2}\right)\right]
\]
\[
=\sum_{n\geq1}\sum_{m=1}^{2}\dbinom{2}{m}\left(-1\right)^{m}\sum_{k=1}^{\ell m}\dbinom{\ell m}{k}\frac{k!}{a^{k+1}}\left[\sin\left(a+\frac{k\pi}{2}\right)-\delta_{\ell m,k}\sin\left(\frac{k\pi}{2}\right)\right]
\]
where $\delta_{i,j}$ is the Kronecker delta and the last identity
follows from the fact the addends with $k=0$, $m=0,1,2$ cancel each
other out. As a consequence, we deduce that the series is absolutely convergent.
We remark that
\begin{align*}
  \sum_{n \geq 1}
    \sum_{m=1}^{2}
      \dbinom{2}{m} \left(-1\right)^{m + 1}
      \frac{\sin\left(\frac{\ell m \pi}{2}\right)}
           {\left(2\pi nx^{1/\ell}\right)^{\ell m + 1}}
  &=
  \sum_{n \geq 1}
    \Bigl(
      \frac{2\sin\left(\frac{\ell \pi}{2}\right)}
           {\left(2\pi nx^{1/\ell}\right)^{\ell + 1}}
      -
      \frac{\sin\left(\ell \pi\right)}
           {\left(2\pi nx^{1/\ell}\right)^{2 \ell + 1}}
    \Bigr) \\
  &=
  \frac{2\sin\left(\frac{\ell \pi}{2}\right)}
       {\left(2\pi x^{1/\ell}\right)^{\ell + 1}}
  \sum_{n \geq 1}
    \frac1{n^{\ell + 1}}
  =
  \frac{\sin\left(\frac{\ell \pi}{2}\right)}
       {2^\ell \pi^{\ell + 1} x^{1 + 1/\ell}}
  \zeta(\ell + 1).
\end{align*}
Hence
\begin{align*}
  \int_{0}^{x}t
    \left\{ \left(x-t\right)^{1/\ell}\right\}  \, \dx t
  &=
  \frac{x^{2}}{2}
  -
  x^{2+1/\ell}
  \sum_{n\geq1}
    \sum_{m=1}^{2}\dbinom{2}{m}\left(-1\right)^{m}
    \sum_{k=1}^{\ell m}\dbinom{\ell m}{k}
      \frac{k! \sin\left(2\pi nx^{1/\ell}+\frac{k\pi}{2}\right)}
           {\left(2\pi nx^{1/\ell}\right)^{k+1}} \\
  &\qquad-
  x
  \frac{\sin\left(\frac{\ell \pi}{2}\right) \zeta(\ell + 1)}
       {2^\ell \pi^{\ell + 1}}
  =
  -\M_2^{\ell}(x) + O(x).
\end{align*}
In particular note that
\begin{align*}
x^{2+1/\ell}&
\sum_{n\geq1}\sum_{m=1}^{2}\dbinom{2}{m}\left(-1\right)^{m}
\sum_{k=1}^{\ell m}\dbinom{\ell m}{k}\frac{k!\sin\left(2\pi nx^{1/\ell}+\frac{k\pi}{2}\right)}{\left(2\pi nx^{1/\ell}\right)^{k+1}}
\\
&\quad
\ll x^{2+1/\ell}\sum_{n\geq1}\left(\sum_{k=1}^{\ell}\dbinom{\ell}{k}\frac{k!}{n^{k+1}x^{(k+1)/\ell}}
+\sum_{k=1}^{2\ell}\dbinom{2\ell}{k}\frac{k!}{n^{k+1}x^{(k+1)/\ell}}\right)\ll_{\ell}x^{2-1/\ell}.
\end{align*}

Note also that for $\ell=2$ the integral $\int_{0}^{1}\left(1-u^{\ell}\right)^{2}\cos\left(2\pi nx^{1/\ell}u\right) \, \dx u$
can be written in terms of the Bessel $J$ function, due to the integral
representation
\[
\int_{0}^{1}\left(1-s^{2}\right)^{\nu-\frac{1}{2}}\cos\left(us\right) \, \dx s
=\pi^{1/2}J_{\nu}\left(u\right)\Gamma\left(\nu+\frac{1}{2}\right)\left(\frac{u}{2}\right)^{-\nu},\,u\in\mathbb{C},\,\text{Re}(\nu)>-\frac{1}{2}
\]
and this is coherent with the results obtained with the Ces\`aro averages
of the functions that count the number of representations of integers
as a sum of prime powers and squares (see \cite{CAN1,CAN2,CANGAMZAC,LANZAC1},
even if in these results the case $k=1$ is not achieved). 

It remains to consider the error term in \eqref{MT-frac}:
unconditionally we have
\[
  \int_{0}^{x}\left|\psi\left(t\right)-t\right| \, \dx t 
  \ll
  1 + \int_{2}^{x}t\exp\left(-C\sqrt{\log\left(t\right)}\right) \, \dx t
  \ll
  x^{2}\exp\left(-C\sqrt{\log\left(x\right)}\right).
\]
In the conditional case we write
\[
  \int_{0}^{x}\left|\psi\left(t\right)-t\right| \, \dx t 
  \ll
  \Bigl( \int_0^x \vert \psi(t) - t \vert^2 \, \dx t \int_0^x \dx t \Bigr)^{1/2}
  \ll
  x^{3/2},
\]
by an estimate due to Cram\'er (see Theorem 13.5 of Montgomery and Vaughan
\cite{MV}).
This concludes the proof.
\end{proof}

Hence, if we put $S(x)=S_{n,m,\ell,k}\left(x\right):=\sin\left(2\pi nx^{1/\ell}+\frac{k\pi}{2}\right)$ and $I = [2 / \lambda, b]$, 
using Theorem \ref{prime_integ} and recalling \eqref{def-I}, we obtain
\begin{cor}
\label{cor-HL-gen}
\begin{align*}
  \Sigma_{\ell}(f; \lambda; [0,b])
  &=
  \frac{\ell^{2}\lambda^{1+1/\ell}}{2\ell^{2}+3\ell+1}
  \I(2 + 1/\ell; I)
  - \frac12 \lambda
  \I(2; I) \\
  &\qquad
  -\frac{\Gamma\left(1/\ell\right)}{\ell}\sum_{\rho}\frac{\lambda^{\rho+1/\ell}\Gamma\left(\rho\right)}{\Gamma\left(\rho+2+1/\ell\right)}
  \I(\rho + 1 + 1/\ell; I) \\
  &\qquad
  +\sum_{n\geq1}\sum_{m=1}^{2}\sum_{k=1}^{\ell m}\dbinom{2}{m}\dbinom{\ell m}{k}\frac{\left(-1\right)^{m}k!\lambda^{1-k/\ell}}{\left(2\pi n\right)^{k+1}}\int_{2/\lambda}^{b}f^{\prime\prime}\left(w\right)w^{2-k/\ell}S\left(\lambda w\right) \, \dx w\\
  &\qquad
  +O\left(\lambda\int_{2/\lambda}^{b}\left|f^{\prime\prime}\left(w\right)\right|w^{2}\exp\left(-C\sqrt{\log\left(\lambda w\right)}\right) \, \dx w\right).
\end{align*}
\end{cor}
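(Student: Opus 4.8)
The plan is to start from the identity that defines $\Sigma_\ell$, namely
\[
  \Sigma_\ell(f;\lambda;[0,b])
  =
  \frac1\lambda\int_0^b f''(w)\,(\psi*R_\ell)(\lambda w)\,\dx w ,
\]
which is the instance of \eqref{eq:convpulita} with $g_1=r_\ell$, $g_2=\Lambda$ and $a=0$ (using commutativity of the Laplace convolution \eqref{def-conv}). Since $\psi(t)=0$ for $0\le t<2$, the convolution $(\psi*R_\ell)(x)=\int_0^x\psi(t)R_\ell(x-t)\,\dx t$ vanishes for $x\le 2$, so I would first restrict the range of integration to $I=[2/\lambda,b]$ and then substitute the explicit formula of Theorem \ref{prime_integ},
\[
  (\psi*R_\ell)(\lambda w)=\M_0^{\ell}(\lambda w)+\M_1^{\ell}(\lambda w)+\M_2^{\ell}(\lambda w)+O\bigl(\Delta(\lambda w)\bigr),
\]
and handle the resulting contributions one at a time.

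For the main terms the work is a pure rescaling: in every monomial $(\lambda w)^{\alpha}$ appearing in $\M_j^{\ell}(\lambda w)$ I would write $(\lambda w)^{\alpha}=\lambda^{\alpha}w^{\alpha}$, pull $\lambda^{\alpha-1}$ outside, and recognise what remains as $\int_I w^{\alpha}f''(w)\,\dx w=\I(\alpha;I)$, with the notation \eqref{def-I}. This turns $\M_0^{\ell}$ into $\frac{\ell^2\lambda^{1+1/\ell}}{2\ell^2+3\ell+1}\,\I(2+1/\ell;I)$, the piece $-\frac12(\lambda w)^2$ of $\M_2^{\ell}$ into $-\frac12\lambda\,\I(2;I)$, and $\M_1^{\ell}$ into $-\frac{\Gamma(1/\ell)}{\ell}\sum_\rho\frac{\lambda^{\rho+1/\ell}\Gamma(\rho)}{\Gamma(\rho+2+1/\ell)}\,\I(\rho+1+1/\ell;I)$. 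For the oscillatory double series in $\M_2^{\ell}$ I would use the identity $(\lambda w)^{2+1/\ell}\bigl(2\pi n(\lambda w)^{1/\ell}\bigr)^{-(k+1)}=\lambda^{2-k/\ell}w^{2-k/\ell}(2\pi n)^{-(k+1)}$, so that after division by $\lambda$ the $w$-dependence collapses to $\int_{2/\lambda}^{b}f''(w)\,w^{2-k/\ell}S(\lambda w)\,\dx w$, reproducing the stated triple sum. The error term is bounded directly, using $\Delta(x)=x^2\exp(-C\sqrt{\log x})$:
\[
  \frac1\lambda\int_I|f''(w)|\,\Delta(\lambda w)\,\dx w
  \ll
  \lambda\int_{2/\lambda}^{b}|f''(w)|\,w^2\exp\bigl(-C\sqrt{\log(\lambda w)}\bigr)\,\dx w ,
\]
and the conditional case is identical with $x^{3/2}$ in place of $\Delta(x)$.

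The only point that genuinely needs care is the interchange of the infinite series with the integral. The $m$- and $k$-sums are finite, so nothing is needed there. For the sum over non-trivial zeros, Stirling's formula gives $\bigl|\Gamma(\rho)/\Gamma(\rho+2+1/\ell)\bigr|\asymp|\rho|^{-2-1/\ell}$, while $\bigl|(\lambda w)^{\rho+1/\ell+1}\bigr|=(\lambda w)^{\Re(\rho)+1/\ell+1}$ stays bounded for $w\in I$ since $\Re(\rho)\in(0,1)$; as $\sum_\rho|\rho|^{-2-1/\ell}<\infty$ and $f''\in L^1(I)$ (the hypotheses on $f$ force $f''$ integrable on the bounded interval), the series converges absolutely and uniformly on $I$, and the term-by-term integration is justified exactly as for the Goldbach case in the proof of Theorem \ref{thm:main}. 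For the sum over $n$, on $I$ one has $(\lambda w)^{1/\ell}\ge 2^{1/\ell}$, so each term is $\ll_{\ell,b}n^{-(k+1)}$ uniformly in $w$, and absolute convergence of this series was already established inside the proof of Theorem \ref{prime_integ}; hence the exchange is again valid. Collecting the five contributions gives the claimed formula. I expect no obstacle beyond this bookkeeping: the substantive analytic input — the explicit formula for $\psi*R_\ell$ and the convergence of the double sum over the zeros — is already provided by Theorem \ref{prime_integ} and by the argument behind Theorem \ref{thm:main}.
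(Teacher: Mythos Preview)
Your proposal is correct and follows exactly the approach the paper takes: the paper simply states that the corollary follows from Theorem~\ref{prime_integ} together with the definition~\eqref{def-I} after restricting to $I=[2/\lambda,b]$, and you have spelled out precisely this substitution-and-rescaling argument, together with the (routine) justification for swapping the series over zeros and over $n$ with the integral. There is nothing to add.
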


We remark that in the special case $\ell = 2$, $\lambda = 1$, $b = N$
and $f(x) = (1-x/N)^k$ for $x \in [0, 1]$ and $0$ elsewhere, the first
three terms in the development above agree with the corresponding
terms in the main result in Languasco and Zaccagnini \cite{LANZAC1}.

\section{a distributional approach to additive problems}

In this part, we want to show that our approach shows
how some techniques in distribution theory can be used to address
the study of asymptotic formulas for binary averages of arithmetical functions.
Let $\mathcal{D}\left(\mathbb{R}_{0}^{+}\right)$ be the Schwartz
space of the test functions consisting of smooth compactly supported
functions with its usual topology, and denote with $\mathcal{D}^{\prime}\left(\mathbb{R}_{0}^{+}\right)$
the space of distributions. We recall some notions and results about
the concept of quasiasymptotic behavior.

\begin{defn}
A distribution $g\in\mathcal{D}^{\prime}\left(\mathbb{R}_{0}^{+}\right)$
has quasiasymptotic behavior at infinity in $\mathcal{D}^{\prime}\left(\mathbb{R}_{0}^{+}\right)$
with respect to a real function $\rho$, which is assumed to be positive
and measurable near infinity, if for every test function $\phi$ we
have that the limit
\begin{equation}
\lim_{\lambda\rightarrow+\infty}\left\langle \frac{g\left(\lambda x\right)}{\rho\left(\lambda\right)},\phi\left(x\right)\right\rangle =\left\langle \gamma(x),\phi\left(x\right)\right\rangle \label{eq:quasiasym def}
\end{equation}
exists and is finite, where $\left\langle f,g\right\rangle :=\int f(x)g(x)\,\dx x$.
It is common to use the symbol
\[
g\left(\lambda x\right)\sim\rho\left(\lambda\right)\gamma(x)\quad\text{as}\:\lambda\rightarrow+\infty\,\text{in}\,\mathcal{D}^{\prime}\left(\mathbb{R}_{0}^{+}\right)
\]
to indicate the quasiasymptotic behavior at infinity. (see, e.g, \cite{ESTKAN,PILSTATAK}).
\end{defn}

\begin{lem}
With the previous assumptions on $\rho(\lambda)$ we get that if (\ref{link quasiasymp})
holds and $\gamma\neq0$ then $\rho$ is a regular varying function
at infinity, that is
\[
\lim_{x\rightarrow+\infty}\frac{\rho\left(ax\right)}{\rho(x)}=a^{\alpha}
\]
for every $a>0$ and for some $\alpha\in\mathbb{R}$, which is called
index of regular variation (see \cite{KAR1,KAR2}), and $\gamma$
is a homogeneous distribution, that is

\[
\left\langle \gamma\left(ax\right),\phi\left(x\right)\right\rangle =a^{\alpha}\left\langle \gamma\left(x\right),\phi\left(x\right)\right\rangle ,\,\forall a>0,\,\forall\phi\in\mathcal{D}\left(\mathbb{R}_{0}^{+}\right)
\]
having degree of homogeneity equal to the index of regular variation.
\end{lem}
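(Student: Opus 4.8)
The plan is to squeeze a dilation invariance out of the quasiasymptotic relation \eqref{eq:quasiasym def}. Since $\gamma\neq0$, fix once and for all a test function $\phi_{0}\in\mathcal{D}(\mathbb{R}_{0}^{+})$ with $\langle\gamma,\phi_{0}\rangle\neq0$. For a fixed $a>0$ and any $\phi\in\mathcal{D}(\mathbb{R}_{0}^{+})$, the substitution $y=ax$ gives $\langle g(a\lambda x),\phi(x)\rangle=\langle g(\lambda y),\phi_{a}(y)\rangle$, where $\phi_{a}(y):=a^{-1}\phi(y/a)$ is again a test function. Dividing by $\rho(\lambda)$ and letting $\lambda\to+\infty$, the hypothesis gives $\rho(\lambda)^{-1}\langle g(a\lambda x),\phi(x)\rangle\to\langle\gamma,\phi_{a}\rangle$. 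On the other hand, writing $\rho(\lambda)^{-1}\langle g(a\lambda x),\phi(x)\rangle=\frac{\rho(a\lambda)}{\rho(\lambda)}\cdot\rho(a\lambda)^{-1}\langle g(a\lambda x),\phi(x)\rangle$ and applying \eqref{eq:quasiasym def} along $a\lambda\to+\infty$ to the second factor, one sees that $\rho(a\lambda)/\rho(\lambda)$ must converge; taking $\phi=\phi_{0}$ identifies the limit
\[
  c(a):=\lim_{\lambda\to+\infty}\frac{\rho(a\lambda)}{\rho(\lambda)}=\frac{\langle\gamma,(\phi_{0})_{a}\rangle}{\langle\gamma,\phi_{0}\rangle},
\]
which in particular is finite. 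Running the same argument with $1/a$ in place of $a$ and using that $\frac{\rho(a\lambda)}{\rho(\lambda)}\cdot\frac{\rho(\lambda)}{\rho(a\lambda)}\equiv1$ shows $c(a)c(1/a)=1$, so $c(a)\neq0$ for every $a>0$.

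Next I would show that $c$ is a power. Factoring $\rho(ab\lambda)/\rho(\lambda)=[\rho(ab\lambda)/\rho(b\lambda)]\cdot[\rho(b\lambda)/\rho(\lambda)]$ and letting $\lambda\to+\infty$ yields the multiplicative identity $c(ab)=c(a)c(b)$ for all $a,b>0$. Since $\rho$ is positive and measurable near infinity, for each large $\lambda$ the map $a\mapsto\rho(a\lambda)/\rho(\lambda)$ is measurable, and hence so is the pointwise limit $c$; being a positive measurable solution of the Cauchy multiplicative equation, $c(a)=a^{\alpha}$ for some $\alpha\in\mathbb{R}$ (pass to $t\mapsto\log c(e^{t})$, which is additive and measurable, hence linear). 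This is precisely the assertion that $\rho$ is a regular varying function at infinity with index $\alpha$.

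Finally, the homogeneity of $\gamma$ comes for free from the two evaluations of $\lim_{\lambda}\rho(\lambda)^{-1}\langle g(a\lambda x),\phi(x)\rangle$ found above: one equals $\langle\gamma,\phi_{a}\rangle$, the other equals $c(a)\langle\gamma,\phi\rangle=a^{\alpha}\langle\gamma,\phi\rangle$. Recalling that for a distribution the dilation $\gamma(ax)$ is defined precisely by $\langle\gamma(ax),\phi(x)\rangle=\langle\gamma,\phi_{a}\rangle$, we conclude that $\langle\gamma(ax),\phi(x)\rangle=a^{\alpha}\langle\gamma(x),\phi(x)\rangle$ for all $a>0$ and all $\phi\in\mathcal{D}(\mathbb{R}_{0}^{+})$, i.e. $\gamma$ is homogeneous of degree equal to the index of regular variation. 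I expect the only genuinely delicate point to be the step from "$c$ multiplicative" to "$c(a)=a^{\alpha}$": one must first secure the measurability of $c$ from the standing hypothesis that $\rho$ is measurable near infinity, and only then invoke the classical description of measurable solutions of Cauchy's functional equation; everything else is bookkeeping with dilations of test functions and repeated use of \eqref{eq:quasiasym def} along the rescaled parameter $a\lambda$.
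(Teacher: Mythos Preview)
The paper does not actually prove this lemma; it simply cites Estrada--Kanwal \cite{ESTKAN0} for a proof of a more general result. Your argument is correct and is essentially the standard proof one finds in that literature: extract the limit $c(a)=\lim_{\lambda\to\infty}\rho(a\lambda)/\rho(\lambda)$ by comparing the two ways of computing $\lim_{\lambda}\rho(\lambda)^{-1}\langle g(a\lambda x),\phi(x)\rangle$, show $c$ is multiplicative, and invoke the measurable-Cauchy classification to force $c(a)=a^{\alpha}$; homogeneity of $\gamma$ then drops out. The only point worth tightening in presentation is the measurability step: since $\rho$ is only assumed measurable \emph{near infinity}, for a fixed $\lambda$ the map $a\mapsto\rho(a\lambda)/\rho(\lambda)$ is measurable only on $\{a:a\lambda\text{ large}\}$, so one should argue locally (on each compact $K\subset(0,\infty)$, choose $\lambda$ large enough that $K\lambda$ lies in the region of measurability, and take the pointwise limit there). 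With that remark your proof is complete and would serve as a self-contained substitute for the citation.
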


For a proof of a more general result, see \cite{ESTKAN0}.
It is well-known that this approach can be used to prove the Prime
Number Theorem (PNT); indeed, the quasiasymptotics behavior
\[
\lim_{\lambda\rightarrow+\infty}\psi^{\prime}\left(\lambda x\right)=\lim_{\lambda\rightarrow+\infty}\sum_{n\geq1}\Lambda\left(n\right)\delta\left(\lambda x-n\right)=1\text{\ensuremath{\qquad\text{ in }}\ensuremath{\mathcal{D}^{\prime}\left(\mathbb{R}_{0}^{+}\right)}}
\]
where, clearly, $\psi^{\prime}\left(\lambda x\right)$ is interpreted
as a distribution and $\delta(x)$ is the Dirac delta, can be used
to prove that $\psi(x)\sim x$ as $x\rightarrow+\infty$ (see \cite{ESTVIN}).
Now, equation \ref{eq:abel conseq} holds if we take the weight
$f$ in $\mathcal{D}\left(\mathbb{R}_{0}^{+}\right)$, hence this
formula can be reformulated in terms of quasiasymptotics behavior.
Indeed, taking $G\left(x\right)=\sum_{n\leq x}g(n)$ and $\lambda>0$,
we have
\[
G^{\prime}\left(x\right)=\sum_{n\geq1}g(n)\delta\left(x-n\right)
\]
and since $G(x)$ is a locally integrable function with support in
$\mathbb{R}_{0}^{+}$, it is not difficult to prove that the convolution
of two summatory functions $\left(G_{1}*G_{2}\right)$ is well defined
and also it is the convolution, in the sense of distributions,
of $\partial^{2}\left(G_{1}*G_{2}\right)=\left(G_{1}^{\prime}*G_{2}^{\prime}\right)$.
For more information about the convolution of distributions see,
e.g., \cite{VLA}, chapter $1$, sections $3,4$. So, we have that
the main formula \ref{eq:abel conseq} can be interpreted as
\[
\sum_{n\geq1}\sum_{m\geq1}g_{1}(n)g_{2}(m)\delta\left(\lambda x-n-m\right)=\left(G_{1}^{\prime}*G_{2}^{\prime}\right)\left(\lambda x\right)
\]
clearly, in the sense of distributions. So, if we have sufficient
information about the convolution $\left(G_{1}*G_{2}\right)$, as
a consequence we have a simple method to evaluate the asymptotic behavior
of $\sum_{n\leq x}\mathcal{G}\left(n\right)$.

We recall the following theorem:

\begin{thm}
\label{link quasiasymp}Let $S_{+}^{\prime}$ the space of tempered
distributions whose supports lie on the positive half-line and $C,\alpha>0$.
The generalized function $g(x)\in S_{+}^{\prime}$ has a quasiasymptotic
behaviour 
\[
g\left(\lambda x\right)\sim C\frac{\left(\lambda x\right)_{+}^{\alpha}}{\Gamma\left(\alpha+1\right)}\quad\text{as}\,\lambda\rightarrow+\infty\,\text{in}\,\mathcal{D}^{\prime}\left(\mathbb{R}_{0}^{+}\right)
\]
where $(x)_{+}=\max\left\{ x,0\right\} $, if and only if there exists
a non-negative integer $N>-\alpha-1$ such that 
\[
g^{(-N)}(x)\sim C\frac{(x)_{+}^{\alpha+N}}{\Gamma\left(\alpha+N+1\right)}
\]
in the ordinary sense as $x\rightarrow+\infty.$
\end{thm}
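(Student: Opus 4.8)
The statement is the classical \emph{structural theorem} relating the quasiasymptotic behaviour of a tempered distribution supported on a half-line to the ordinary asymptotics of one of its primitives; I would prove it essentially as in \cite{ESTKAN,PILSTATAK,ESTKAN0}, treating the two implications separately. Throughout, $g^{(-N)}$ denotes the $N$-th primitive of $g$ inside $S_{+}^{\prime}$ (uniquely determined by requiring that its support lie in $[0,+\infty)$), and the relation ``$g^{(-N)}(x)\sim C(x)_{+}^{\alpha+N}/\Gamma(\alpha+N+1)$ in the ordinary sense'' is understood to mean that $g^{(-N)}$ coincides with a locally integrable function in a neighbourhood of $+\infty$ whose ratio to $x^{\alpha+N}/\Gamma(\alpha+N+1)$ tends to $C$.

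For the implication ``$\Leftarrow$'', assume such an $N$ exists; note that $\alpha+N>-1$, so $(x)_{+}^{\alpha+N}$ is locally integrable. The first step is to upgrade the pointwise asymptotics of $g^{(-N)}$ to the distributional quasiasymptotics $g^{(-N)}(\lambda x)\sim\lambda^{\alpha+N}C(x)_{+}^{\alpha+N}/\Gamma(\alpha+N+1)$ in $\mathcal{D}^{\prime}(\mathbb{R}_{0}^{+})$. This is a routine dominated-convergence argument: for a test function $\phi$, the contribution of a fixed bounded neighbourhood of the origin, where $g^{(-N)}$ is a fixed compactly supported distribution, is $O(\lambda^{-1-\alpha-N})=o(1)$ after normalising by $\lambda^{\alpha+N}$, while on the complementary region one has $\lambda^{-\alpha-N}g^{(-N)}(\lambda x)\to Cx^{\alpha+N}/\Gamma(\alpha+N+1)$ for every $x>0$, dominated by a constant multiple of $x^{\alpha+N}$, which is integrable against $\phi$ precisely because $\alpha+N>-1$. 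The second step is to apply $\partial^{N}$, which is sequentially continuous on $\mathcal{D}^{\prime}$: using the chain rule $\partial_{x}^{N}\bigl(g^{(-N)}(\lambda x)\bigr)=\lambda^{N}g(\lambda x)$ together with the identity $\partial^{N}\bigl((x)_{+}^{\alpha+N}/\Gamma(\alpha+N+1)\bigr)=(x)_{+}^{\alpha}/\Gamma(\alpha+1)$ for the analytic family $(x)_{+}^{z}$, and then dividing by $\lambda^{N}$, one obtains $g(\lambda x)\sim\lambda^{\alpha}C(x)_{+}^{\alpha}/\Gamma(\alpha+1)=C(\lambda x)_{+}^{\alpha}/\Gamma(\alpha+1)$, which is the asserted quasiasymptotics.

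For the implication ``$\Rightarrow$'', I would proceed in three steps. First, quasiasymptotics in $S_{+}^{\prime}$ is preserved by primitivation, with the index raised by one: since the unique $S_{+}^{\prime}$-primitive is convolution with the Heaviside function $H$, and this convolution is sequentially continuous on $S_{+}^{\prime}$ (see \cite{VLA}), the elementary scaling identity $(H*h)(\lambda x)=\lambda\,(H*[h(\lambda\,\cdot\,)])(x)$ turns $h(\lambda x)\sim\lambda^{\beta}\gamma(x)$ into $h^{(-1)}(\lambda x)\sim\lambda^{\beta+1}(H*\gamma)(x)$; iterating, $g^{(-N)}(\lambda x)\sim\lambda^{\alpha+N}C(x)_{+}^{\alpha+N}/\Gamma(\alpha+N+1)$ in $\mathcal{D}^{\prime}(\mathbb{R}_{0}^{+})$ for every $N\ge0$. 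Second, by the structure theorem for tempered distributions every element of $S^{\prime}$ is a finite-order derivative of a continuous function of at most polynomial growth; applied to $g$, and using that the $S_{+}^{\prime}$-primitives of $g$ differ from such a function only by a polynomial, this shows that for $N$ large enough $F:=g^{(-N)}$ is a genuine continuous function of polynomial growth supported in $[0,+\infty)$, and we may also arrange $\alpha+N\ge1$. Third, it remains to deduce the ordinary pointwise asymptotics $F(x)\sim Cx^{\alpha+N}/\Gamma(\alpha+N+1)$ from the distributional quasiasymptotics of $F$.

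This last step is the crux and the main obstacle, since distributional convergence does not by itself force pointwise convergence, so a Tauberian hypothesis is needed. The decisive point is that the limit is a \emph{positive} multiple of a power ($C>0$) and that $F$ is a repeated primitive: testing $\lambda^{-\alpha-N}F(\lambda x)$ against narrow non-negative bumps concentrated near $x=1$ shows that the averages of $F$ over intervals $[\lambda(1-\varepsilon),\lambda]$ are asymptotic to $\varepsilon\,C\lambda^{\alpha+N}/\Gamma(\alpha+N+1)$, and this averaged information is promoted to the pointwise statement by the Karamata-type Tauberian theorem for quasiasymptotics of \cite{PILSTATAK} (equivalently \cite{ESTKAN0}), the requisite slow-oscillation (one-sided Tauberian) condition being available after, if necessary, passing to one further primitive so that $F$ becomes monotone in a neighbourhood of $+\infty$. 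Combining the three steps completes the proof.
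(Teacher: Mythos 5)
The paper does not actually prove this theorem; it only points to \cite{Vin}, Proposition 1.8, and \cite{DROZAV}, Theorem 1, so your attempt has to be measured against the standard proof in those references. Your ``$\Leftarrow$'' direction is correct and is essentially that standard argument: ordinary asymptotics of a locally integrable function upgrades to quasiasymptotics by dominated convergence (the near-origin piece being $O(\lambda^{-1-\alpha-N})$), and differentiation is sequentially continuous on $\mathcal{D}^{\prime}$ and lowers the degree by one, the identity $\partial^{N}\bigl(x_{+}^{\alpha+N}/\Gamma(\alpha+N+1)\bigr)=x_{+}^{\alpha}/\Gamma(\alpha+1)$ being unproblematic since $\alpha>0$. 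The first two steps of your ``$\Rightarrow$'' direction (primitivation preserves quasiasymptotics and raises the degree by one; for $N$ large $g^{(-N)}$ is a continuous function of polynomial growth) are also fine.

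The gap is exactly at the step you yourself call the crux. Your plan is to extract averaged asymptotics by testing against narrow bumps and then invoke a Karamata-type Tauberian theorem, the Tauberian hypothesis being ``available after, if necessary, passing to one further primitive so that $F$ becomes monotone near $+\infty$.'' This is circular: $g^{(-N-1)}$ is eventually monotone if and only if its derivative $F=g^{(-N)}$ eventually has constant sign, and eventual positivity of $F$ is precisely a consequence of the pointwise asymptotics you are trying to prove; nothing in the hypotheses supplies it, since $g$ is an arbitrary element of $S_{+}^{\prime}$ with no positivity assumption ($C>0$ concerns the limit, not $F$). The result is in fact a \emph{structural} theorem, not a Tauberian one, and the standard proof needs no Tauberian condition: by Banach--Steinhaus the family $\{\lambda^{-\alpha}g(\lambda\,\cdot\,)\}_{\lambda\ge1}$ is equicontinuous on $\mathcal{D}_{[-1,2]}$, say of order $m$, so the convergence $\langle\lambda^{-\alpha}g(\lambda x),\phi(x)\rangle\to\langle\gamma,\phi\rangle$ extends from $\phi\in\mathcal{D}$ to all $\phi$ of class $C^{m}$ supported there. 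Choosing $N\ge m+2$ one writes
\[
g^{(-N)}(\lambda)
=\Bigl\langle g(t),\frac{(\lambda-t)_{+}^{N-1}}{(N-1)!}\Bigr\rangle
=\lambda^{N}\Bigl\langle g(\lambda s),\frac{(1-s)_{+}^{N-1}}{(N-1)!}\Bigr\rangle,
\]
where $(1-s)_{+}^{N-1}$, suitably cut off away from $\mathrm{supp}\,g$, is an admissible $C^{N-2}$ test object; the quasiasymptotic hypothesis then yields directly
\[
\frac{g^{(-N)}(\lambda)}{\lambda^{\alpha+N}}
\longrightarrow
\frac{C}{\Gamma(\alpha+1)\,(N-1)!}\int_{0}^{1}s^{\alpha}(1-s)^{N-1}\,\dx s
=\frac{C}{\Gamma(\alpha+N+1)},
\]
which is the desired ordinary asymptotics. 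Unless you replace your third step by an argument of this kind (or import the structural theorem wholesale), the ``$\Rightarrow$'' direction remains unproved.
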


For a proof see, e.g., \cite{Vin}, Proposition $1.8$ or \cite{DROZAV},
Theorem 1.
Now we are able to formalize the following theorem.

\begin{thm}
Let $\alpha,C>0.$ If
\[
\left(G_{1}*G_{2}\right)\left(x\right)\sim C\frac{x^{\alpha+2}}{\Gamma\left(\alpha+3\right)},\text{as }\,x\rightarrow+\infty,
\]
then
\[
\sum_{n\leq x}\mathcal{G}\left(n\right)\sim C\frac{x^{\alpha+1}}{\Gamma\left(\alpha+2\right)}.
\]
\end{thm}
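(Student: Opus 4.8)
The plan is to carry out the required ``deconvolution'' entirely in the distributional language set up above, applying Theorem~\ref{link quasiasymp} to the single distribution
\[
  T(x) := \partial^{2}\bigl(G_{1}*G_{2}\bigr)(x)
  = \bigl(G_{1}^{\prime}*G_{2}^{\prime}\bigr)(x)
  = \sum_{n\geq1}\mathcal{G}(n)\,\delta(x-n)
\]
and reading the equivalence it provides in both directions. First I would record that the Lemma with $d=2$ (equation~\eqref{eq: conv psi gen}) gives $\sum_{n\leq x}\mathcal{G}(n)(x-n)=(G_{1}*G_{2})(x)$, equivalently $(G_{1}*G_{2})(x)=\int_{0}^{x}H(t)\,\dx t$ with $H(x):=\sum_{n\leq x}\mathcal{G}(n)$; hence, taking all antiderivatives with support in $[0,+\infty)$ so that they are unique, one has $G_{1}*G_{2}=T^{(-2)}$ and $H=T^{(-1)}$. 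The hypothesis $(G_{1}*G_{2})(x)\sim C x^{\alpha+2}/\Gamma(\alpha+3)$ forces $G_{1}*G_{2}$ to be of polynomial growth, hence tempered, so $T\in S_{+}^{\prime}$ and Theorem~\ref{link quasiasymp} applies to $g=T$.

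The key step is to apply Theorem~\ref{link quasiasymp} with $N=2$, which is admissible since $N=2>-\alpha-1$ as $\alpha>0$. Its ``ordinary'' side is precisely the hypothesis, $T^{(-2)}(x)=(G_{1}*G_{2})(x)\sim C\,(x)_{+}^{\alpha+2}/\Gamma(\alpha+3)$ as $x\to+\infty$, so the theorem yields the quasiasymptotic behaviour
\[
  T(\lambda x)\sim C\,\frac{(\lambda x)_{+}^{\alpha}}{\Gamma(\alpha+1)}
  \qquad\text{as }\lambda\to+\infty\quad\text{in }\mathcal{D}^{\prime}(\mathbb{R}_{0}^{+}).
\]

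Next I would use that antidifferentiation raises the index of a power-type quasiasymptotic by one, so $H=T^{(-1)}$ has the quasiasymptotic behaviour $H(\lambda x)\sim C\,(\lambda x)_{+}^{\alpha+1}/\Gamma(\alpha+2)$. Since $H$ is itself a locally integrable function supported in $[0,+\infty)$, a second application of Theorem~\ref{link quasiasymp}, now to $g=H$ with $N=0$ (admissible because $0>-(\alpha+1)-1$), has as its ``ordinary'' side exactly $H(x)\sim C\,(x)_{+}^{\alpha+1}/\Gamma(\alpha+2)$ as $x\to+\infty$, i.e.\ $\sum_{n\leq x}\mathcal{G}(n)\sim C x^{\alpha+1}/\Gamma(\alpha+2)$, which is the claim. (Equivalently, one may apply Theorem~\ref{link quasiasymp} directly to $g=T$ with $N=1$.)

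I expect the only genuinely delicate point to be checking that $T$, equivalently $G_{1}*G_{2}$, is tempered and supported on the half-line so that Theorem~\ref{link quasiasymp} is applicable at all — this is where the asymptotic hypothesis gets used a second time, through polynomial growth — together with keeping straight that $T^{(-1)}$ and $T^{(-2)}$ are the antiderivatives supported in $[0,+\infty)$, hence equal to $H$ and $G_{1}*G_{2}$. The remaining bookkeeping, namely the identities $\Gamma(\alpha+3)=(\alpha+2)(\alpha+1)\Gamma(\alpha+1)$ and the harmless replacement of $(x)_{+}$ by $x$ for $x\to+\infty$, is routine. I would also note that, in the typical case $g_{1},g_{2}\geq0$ when $H$ is non-decreasing, the same conclusion follows from a one-line elementary Tauberian argument: for fixed small $\delta>0$ one sandwiches $\int_{x}^{(1+\delta)x}H$ and $\int_{(1-\delta)x}^{x}H$ between $\delta x\,H(x)$ and $\delta x\,H\bigl((1\pm\delta)x\bigr)$, inserts $\int_{0}^{y}H(t)\,\dx t=(G_{1}*G_{2})(y)\sim C y^{\alpha+2}/\Gamma(\alpha+3)$, and lets $\delta\to0$.
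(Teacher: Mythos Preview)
Your main argument has a gap at the step where you pass from the quasiasymptotic behaviour back to an ordinary asymptotic at a \emph{specific} antiderivative level. Theorem~\ref{link quasiasymp} is a biconditional with $N$ quantified existentially: from $T(\lambda x)\sim C(\lambda x)_+^{\alpha}/\Gamma(\alpha+1)$ you may only conclude that \emph{some} admissible $N$ makes $T^{(-N)}(x)\sim C x^{\alpha+N}/\Gamma(\alpha+N+1)$ hold in the ordinary sense, not that the particular choice $N=1$ (equivalently $N=0$ for $g=H$) works. Descending from a quasiasymptotic of a locally integrable function to its own pointwise asymptotic is a Tauberian step and is false in general: for instance $H(x)=x^{\alpha+1}(1+\sin x)$ has the quasiasymptotic $H(\lambda x)\sim(\lambda x)_+^{\alpha+1}$ in $\mathcal{D}'(\mathbb{R}_0^+)$ by Riemann--Lebesgue, yet $H(x)/x^{\alpha+1}$ has no limit. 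Thus both your ``second application of Theorem~\ref{link quasiasymp} with $N=0$'' and the parenthetical ``$g=T$ with $N=1$'' invoke the theorem in a direction it does not supply.

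The paper proceeds exactly as you do up to the quasiasymptotic of $T=\sum_n\mathcal{G}(n)\delta(\cdot-n)$, but then carries out the Tauberian step by hand rather than appealing to Theorem~\ref{link quasiasymp} again: for $\varepsilon>0$ it chooses test functions $\phi_1,\phi_2\in\mathcal{D}(\mathbb{R}_0^+)$ with $\phi_1\le\chi_{(0,1]}\le\phi_2$ up to sets of measure $O(\varepsilon)$, squeezes $\lambda^{-\alpha-1}\sum_{n\le\lambda}\mathcal{G}(n)$ between the two weighted sums, and lets $\varepsilon\to0$. This sandwich relies on $\mathcal{G}(n)\ge0$, which is implicit in the paper's standing convention $g_j:\mathbb{N}\to\mathbb{R}_0^+$. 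In other words, the paper's actual argument is precisely (a distributional rephrasing of) the ``one-line elementary Tauberian argument'' you relegate to a closing remark; that remark is the correct route, and the monotonicity of $H$ is not optional but the missing Tauberian hypothesis.
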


\begin{proof}
From Theorem \ref{link quasiasymp} and \ref{eq:abel conseq} we deduce that
\[
\sum_{n\geq1}\sum_{m\geq1}g_{1}(n)g_{2}(m)\delta\left(\lambda x-n-m\right)\sim C\frac{\left(\lambda x\right)_{+}^{\alpha}}{\Gamma\left(\alpha+1\right)}\quad\text{as}\,\lambda\rightarrow+\infty\,\text{in}\,\mathcal{D}^{\prime}\left(\mathbb{R}_{0}^{+}\right)
\]
which means that
\[
\lim_{\lambda\rightarrow+\infty}\frac{1}{\lambda^{\alpha+1}}\sum_{n\geq1}\sum_{m\geq1}g_{1}(n)g_{2}(m)\phi\left(\frac{m+n}{\lambda}\right)=\frac{C}{\Gamma(\alpha+1)}\int_{0}^{+\infty}x^{\alpha}\phi\left(x\right)dx
\]
for every test function $\phi(x).$ Hence, following the approach
of \cite{ESTVIN}, if we take $\varepsilon>0$ and
$\phi_{1}(x),\phi_{2}(x)\in\mathcal{D}\left(\mathbb{R}_{0}^{+}\right)$
such that $0\leq\phi_{j}(x)\leq1,\,j=1,2$, $\text{supp}\phi_{1}(x)\subseteq(0,1]$,
$\text{supp}\phi_{2}(x)\subseteq(0,1+\varepsilon]$, $\phi_{1}(x)=1,\,\forall x\in[\varepsilon,1-\varepsilon]$
and $\phi_{2}(x)=1,\forall x\in[\varepsilon,1]$, we get
\[
\limsup_{\lambda\rightarrow+\infty}\frac{1}{\lambda^{\alpha+1}}\sum_{n\leq\lambda}\sum_{m_{1}+m_{2}=n}g_{1}(m_{1})g_{2}(m_{2})\leq\limsup_{\lambda\rightarrow+\infty}\left(\frac{1}{\lambda^{\alpha+1}}\sum_{n\leq\varepsilon\lambda}\sum_{m_{1}+m_{2}=n}g_{1}(m_{1})g_{2}(m_{2})\right.
\]
\[
\left.+\frac{1}{\lambda^{\alpha+1}}\sum_{n\geq1}\sum_{m\geq1}g_{1}(n)g_{2}(m)\phi_{2}\left(\frac{m+n}{\lambda}\right)\right)\leq M(\alpha)\varepsilon^{\alpha+1}+\frac{C}{\Gamma(\alpha+2)}
\]
where $M(\alpha)$ is a suitable function depending only on $\alpha$,
and
\begin{align*}
\liminf_{\lambda\rightarrow+\infty}\frac{1}{\lambda^{\alpha+1}}\sum_{n\leq\lambda}\sum_{m_{1}+m_{2}=n}g_{1}(m_{1})g_{2}(m_{2}) & \geq \liminf_{\lambda\rightarrow+\infty}\frac{1}{\lambda^{\alpha+1}}\sum_{n\geq1}\sum_{m\geq1}g_{1}(n)g_{2}(m)\phi_{1}\left(\frac{m+n}{\lambda}\right) \\
& \geq\frac{C\left(1-2\varepsilon\right)}{\Gamma(\alpha+2)}
\end{align*}
and this completes the proof.
\end{proof}

For example, taking $g_{1}\left(n\right)=g_{2}\left(n\right)=\Lambda\left(n\right)$,
using the trivial estimation $\left(\psi*\psi\right)\left(x\right)=\frac{x^{3}}{6}+o\left(x^{3}\right)$
we immediately get
\[
\sum_{n\leq x}R_{G}\left(n\right)=\sum_{n\leq x}\sum_{m_{1}+m_{2}=n}\Lambda\left(m_{1}\right)\Lambda\left(m_{2}\right)\sim\frac{x^{2}}{2}
\]
as $x\rightarrow+\infty.$
\begin{rem}
The proposed application is just a simple observation. We strongly
believe that the language of the theory of distributions could be very
natural and useful for some problems in analytic number theory.
\end{rem}

\section{Acknowledgements}
The first author is a member of the Gruppo Nazionale per l'Analisi Matematica, la 
Probabilit\`a e le loro Applicazioni (GNAMPA) of the Istituto Nazionale di Alta 
Matematica (INdAM). We thank Prof. Alessandro Languasco for several discussions
about this work and Prof. Jasson Vindas for a conversation about the last chapter of this paper.

\begin{tabular}{l}
Marco Cantarini \\
Dipartimento di Matematica e Informatica \\
Universit\`a di Perugia \\
Via Vanvitelli, 1 \\
06123, Perugia, Italia \\
email (MC): \texttt{marco.cantarini@unipg.it} \\
Alessandro Gambini\\
Dipartimento di Matematica Guido Castelnuovo\\
Sapienza Universit\`a di Roma\\ 
Piazzale Aldo Moro, 5\\
00185 Rome, Italy\\
email (AG): \texttt{alessandro.gambini@uniroma1.it} \\
Alessandro Zaccagnini \\
Dipartimento di Scienze, Matematiche, Fisiche e Informatiche \\
Universit\`a di Parma \\
Parco Area delle Scienze 53/a \\
43124 Parma, Italia \\
email (AZ): \texttt{alessandro.zaccagnini@unipr.it}
\end{tabular}

\end{document}